\newtheorem{theorem}{Theorem}[section]
\newtheorem{conj}[theorem]{Conjecture}
\newtheorem{lemma}[theorem]{Lemma}
\newtheorem{claim}[theorem]{Claim}
\theoremstyle{definition}
\newtheorem{defn}[theorem]{Definition}
\newtheorem*{defn-non}{Definition}
\newlist{Case}{enumerate}{2}
\setlist[Case, 1]{%
    label           =   {\bfseries Case \arabic*.},
    labelindent=1em ,labelwidth=1.3cm, labelsep*=1em, leftmargin =!
}
\setlist[Case, 2]{%
    label           =   {\bfseries Subcase \arabic{Casei}.\arabic*.},
    labelindent=-1em ,labelwidth=1.3cm, labelsep*=1em, leftmargin =!
}
\newcommand{\ex}{\mathrm{ex}}
\begin{document}
\title{A hypergraph analogue of Alon-Frankl Theorem}
\author{%
    Caihong Yang\textsuperscript{\textdagger}\textsuperscript{\textdaggerdbl},%
    \quad Jiasheng Zeng\textsuperscript{\S},%
    \quad Xiao-Dong Zhang\textsuperscript{\S}%
} 
\footnotetext[1]{\scriptsize%
\noindent\textsuperscript{\textdagger}School of Mathematics and Statistics, Fuzhou University, Fuzhou 350108, China;%
\textsuperscript{\textdaggerdbl}Extremal Combinatorics and Probability Group (ECOPRO), Institute for Basic Science (IBS), Daejeon 34126, South Korea. Email: chyang.fzu@gmail.com }
\footnotetext[2]{\scriptsize%
\noindent\textsuperscript{\S}School of Mathematical Sciences, Shanghai Jiao Tong University, Shanghai 200240, China. Email: jasonzeng@sjtu.edu.cn  }
\footnotetext[3]{\scriptsize%
\noindent\textsuperscript{\S}School of Mathematical Sciences,  MOE-LSC and SHL-MAC, Shanghai Jiao Tong University, Shanghai 200240, China. Email: xiaodong@sjtu.edu.cn  }

\date{\today}

\maketitle
\begin{abstract}

Recently, Alon and Frankl (JCTB, 2024) determined  the maximum number of edges in $K_{\ell+1}$-free $n$-vertex graphs with bounded matching number. For integers $\ell\ge r \ge 2$, the family $\mathcal{K}_{\ell+1}^{r}$ consists of all $r$-graphs $F$ with at most $\binom{\ell+1}{2}$ edges such that, for some $(\ell+1)$-set $K$, every pair $\{x,y\} \subseteq K$ is covered by an edge in $F$.  In this paper, we study the maximum number of edges in $\mathcal{K}_{\ell+1}^r$-free  $r$-uniform hypergraphs that have the matching number at most $s$, that is, $\mathrm{ex}_r(n, \{\mathcal{K}_{\ell+1}^r, M^r_{s+1}\})$, and  obtain the exact value for sufficiently large $n$, along with the corresponding extremal hypergraph. This result can be viewed as a hypergraph extension of the work of Alon and Frankl. In addition, for  the $3$-uniform Fano plane $\mathbb{F}$, we  determine the exact value of $\mathrm{ex}_3(n, \{\mathbb{F}, M^3_{s+1}\})$, and characterize the corresponding extremal hypergraph. 

\end{abstract}

{\bf{Key words:}}{ Tur\'an problem, hypergraph, matching number, Fano plane}

\section{Introduction}\label{SEC:Introduction}
Let $\mathcal{F}$ be a family of graphs. Denote by $\mathrm{ex}(n,\mathcal{F})$ the maximum number of edges in an $n$-vertex graph that does not contain any graph from $\mathcal{F}$ as a subgraph. If $\mathcal{F} = \{F\}$, we simply write $\mathrm{ex}(n,F)$ instead of $\mathrm{ex}(n,\mathcal{F})$. Determining the value of $\mathrm{ex}(n,\mathcal{F})$ is one of the central problems in Extremal Graph Theory. Let $K_{\ell+1}$ and $M_{s+1}$ denote the complete graph on $\ell+1$ vertices and the matching of size $s+1$, respectively. Let $T(n,\ell)$ be the $\ell$-partite graph on $n$ vertices with the maximum number of edges, and write $t(n,\ell)=|T(n,\ell)|$. Two classical results in this area are Tur\'{a}n’s Theorem~\cite{T41}, stating that $\mathrm{ex}(n,K_{\ell+1}) = t(n,\ell)$, and the Erd\H{o}s--Gallai Theorem~\cite{EG59}, which gives $$\mathrm{ex}(n,M_{s+1}) = \max\left\{ s(n-s)+\binom{s}{2}, \binom{2s+1}{2} \right\}.$$  Moreover, the extremal graphs are $K_{2s+1}$ with some isolated vertices or the joining of the complete graph $K_s$ and the complement graph $\overline{K}_{n-s}$ of the complete graph $K_{n-s}$.
 
Building upon these classical results, Alon and Frankl~\cite{AF24} recently determined the maximum number of edges in a graph with both bounded clique number and bounded matching number. Let $G(n,\ell,s)$ denote the complete $\ell$-partite graph on $n$ vertices in which one partite set has size $n-s$, and the remaining $\ell-1$ parts induce a copy of $T(s,\ell-1)$. We write $g(n,\ell,s)$ for the number of edges in $G(n,\ell,s)$.
\begin{theorem}[Alon and Frankl~\cite{AF24}]\label{AF2024}
For $n \ge 2s + 1$ and $\ell \ge 2$,
$$
\mathrm{ex}(n,\{K_{\ell+1}, M_{s+1}\}) = \max \{t(2s+1,\ell),\, g(n,\ell,s)\}.
$$
\end{theorem}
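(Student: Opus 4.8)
The plan is to establish matching lower and upper bounds. For the lower bound I exhibit the two graphs appearing in the maximum. The disjoint union of $T(2s+1,\ell)$ with $n-2s-1$ isolated vertices is $\ell$-partite, hence $K_{\ell+1}$-free, and all its edges lie inside a set of $2s+1$ vertices, so its matching number is at most $s$; it has $t(2s+1,\ell)$ edges. The graph $G(n,\ell,s)$ is $\ell$-partite, hence $K_{\ell+1}$-free, and each of its edges meets the union of the $\ell-1$ parts carrying the copy of $T(s,\ell-1)$, a set of size $s$; so its matching number is also at most $s$, and it has $g(n,\ell,s)$ edges. Thus $\mathrm{ex}(n,\{K_{\ell+1},M_{s+1}\})\ge\max\{t(2s+1,\ell),g(n,\ell,s)\}$, and the content of the theorem is the reverse inequality.

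So let $G$ be an $n$-vertex $K_{\ell+1}$-free graph with $\nu:=\nu(G)\le s$. I would first extract structure from the small matching number via the Berge--Tutte formula: pick $S\subseteq V(G)$ with $o(G-S)=n-2\nu+|S|$ and with $\sigma:=|S|$ minimal; since $o(G-S)\le n-\sigma$, this already forces $\sigma\le\nu$. Partition $V(G)\setminus S$ into the set $I$ of vertices forming singleton components of $G-S$ and the set $J$ of vertices in larger components; counting components gives $|J|\le 4(\nu-\sigma)\le 4s$. Since $I$ is independent, has all its neighbours in $S$, and sends no edge to $J$, one gets the splitting
\[
e(G)=e(G[S\cup I])+e(G[J])+e_G(S,J),\qquad e(G[J])+e_G(S,J)\le\binom{|J|}{2}+\sigma|J|.
\]

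The key ingredient for the first term is a Tur\'{a}n-type lemma: if $H$ is $K_{\ell+1}$-free on $X\sqcup Y$ with $Y$ independent and $|Y|$ large compared with $|X|$, then $e(H)\le t(|X|,\ell-1)+|X|\cdot|Y|$. This is powered by Tur\'{a}n's theorem in the form $\mathrm{ex}(m,K_\ell)=t(m,\ell-1)$: once $e(H[X])$ exceeds $t(|X|,\ell-1)$ the induced subgraph $H[X]$ contains a $K_\ell$, no vertex of $Y$ can be complete to it, and this one-edge deficit, summed over the many vertices of $Y$, already absorbs the surplus inside $X$ (a constraint-respecting Zykov symmetrization, or an induction on $\ell$, turns this into the clean bound). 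Applying the lemma with $X=S$, $Y=I$, and substituting $|I|=n-\sigma-|J|$, the terms $\sigma|J|$ cancel and one lands at $e(G)\le t(\sigma,\ell-1)+\sigma(n-\sigma)+\binom{|J|}{2}$. Writing $f(x):=t(x,\ell-1)+x(n-x)$, one computes $f(x+1)-f(x)=\bigl(t(x+1,\ell-1)-t(x,\ell-1)\bigr)+(n-1-2x)\ge n-1-2x$, so $f$ is increasing on $\{0,1,\dots,s\}$ whenever $n\ge 2s+1$; together with $\binom{|J|}{2}\le 8(\nu-\sigma)^2$ this gives
\[
e(G)\ \le\ f(\nu)-(\nu-\sigma)\bigl[(n+1-2\nu)-8(\nu-\sigma)\bigr]\ \le\ f(\nu)\ \le\ f(s)\ =\ g(n,\ell,s)
\]
as soon as $n$ exceeds a fixed multiple of $s$.

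The main obstacle is the leftover window of small $n$, roughly $2s+1\le n\le Cs$ for an absolute constant $C$: there the Tur\'{a}n-type lemma is unavailable — its conclusion genuinely fails once $|I|$ is comparable to $|S|$ — and it is $t(2s+1,\ell)$, not $g(n,\ell,s)$, that can be the dominant term. The case $\sigma=\nu$ is still easy: then $J=\emptyset$ and, since $|I|=n-\nu\ge\nu+1$, the lemma does apply and yields $e(G)\le f(\nu)\le f(s)=g(n,\ell,s)$; so only configurations with $\sigma<\nu$ and $n$ small survive. For these I would use induction on $n$: when $n>3s$ the Berge--Tutte picture still forces some component of $G-S$ to be a single vertex (otherwise $G-S$ would need at least $3(n+\sigma-2\nu)>n-\sigma$ vertices), that vertex has degree at most $\sigma\le s$, and deleting it drops $n$ to $n-1$ at the cost of at most $s=g(n,\ell,s)-g(n-1,\ell,s)$ edges; since $g(n,\ell,s)\ge t(2s+1,\ell)+s$ for $n>3s$, the induction closes. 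The genuinely small window $2s+1\le n\le 3s$ (and the few small $s$) is then a bounded family of cases I would settle directly from the same decomposition, where the matching constraint already pins $e(G)$ below $t(n,\ell)$. Verifying that the two candidate quantities are never jointly exceeded right at their crossover, and identifying precisely which one wins there, is the delicate bookkeeping I expect to cost the most effort.
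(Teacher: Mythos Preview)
The paper does not prove this statement. Theorem~\ref{AF2024} is quoted as a known result of Alon and Frankl~\cite{AF24} (and independently~\cite{FWY24}) to motivate the hypergraph generalization that is the paper's actual contribution; no proof or sketch of it appears anywhere in the text. So there is nothing in the paper to compare your proposal against.

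As for the proposal itself: the lower bound and the overall architecture (Gallai--Edmonds/Berge--Tutte decomposition, isolate the singleton-component set $I$, bound $e(G[S\cup I])$ by a Tur\'{a}n-type lemma, then monotonicity in $\sigma$) is a reasonable plan and close in spirit to how Alon and Frankl actually argue. But two places are genuinely incomplete. First, your ``key lemma'' --- that a $K_{\ell+1}$-free graph on $X\sqcup Y$ with $Y$ independent has at most $t(|X|,\ell-1)+|X|\cdot|Y|$ edges --- is asserted with a heuristic (``one-edge deficit summed over many $Y$-vertices'') that does not constitute a proof; the bound can fail when $|Y|$ is not large, and you never make precise the threshold or supply the symmetrization/induction you allude to. Second, the small-$n$ window $2s+1\le n\le Cs$ is exactly where the two candidate extremal values cross, and you defer it to ``a bounded family of cases'' without giving any argument; this is the crux of the Alon--Frankl theorem, not a detail, and your induction-on-$n$ step only reaches down to $n\approx 3s$. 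In short, you have outlined the large-$n$ regime adequately but left the heart of the result unproved.
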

 Theorem \ref{AF2024} was also obtained in ~\cite{FWY24}. Motivated by this result, Gerbner~\cite{G2024} generalized the theorem of Alon and Frankl. 
When $n$ is sufficiently large, he determined, up to an additive constant, the value of $\mathrm{ex}(n,\{F, M_{s+1}\})$ for graphs $F$ with $\chi(F) > 2$. Later, Xue and Kang~\cite{xue2024generalized} further established stability results for generalized Turán problems with bounded matching number. Using these stability results, they obtained the exact values of $\mathrm{ex}(n,K_r,\{F, M_{s+1}\})$ for $F$ being any non-bipartite graph or a path. Independently, Zhu and Chen~\cite{zhu2025extremal}, using a different approach, also determined the exact value of $\mathrm{ex}(n,\{F, M_{s+1}\})$ for $F$ being any non-bipartite graph or some bipartite graphs. For the case $\chi(F)=2$, several related results are also known. Let $C_k$ denote the cycle of length $k$, and write $C_{\ge k} = \{C_k, C_{k+1}, \ldots\}$ for the collection of all cycles on at least $k$ vertices. Zhao and Lu~\cite{zhao_and_Lu2024generalized} determined the exact value of $\mathrm{ex}(n,\{C_{\ge k}, M_{s+1}\})$, and described the corresponding extremal graphs for sufficiently large $n$. Moreover, Luo, Zhao, and Lu~\cite{LZL2025} determined the exact value of $\mathrm{ex}(n,\{K_{\ell,t}, M_{s+1}\})$ for sufficiently large $s$ and $n$, and for every $3 \le \ell \le t$.

The Tur\'{a}n problem can also be defined for hypergraphs. Let $r \ge 2$, and let $\mathcal{F}$ be a family of $r$-uniform hypergraphs ($r$-graphs for short). Given an $r$-graph $G$, we say that $G$ is \emph{$\mathcal{F}$-free} if it contains no member of $\mathcal{F}$ as a subhypergraph. The \emph{Tur\'{a}n number} $\mathrm{ex}_r(n,\mathcal{F})$ is defined as the maximum number of edges in an $\mathcal{F}$-free $n$-vertex $r$-graph (when $r=2$, this coincides with the Tur\'{a}n number for graphs defined above). Determining $\mathrm{ex}_r(n,\mathcal{F})$ for various families $\mathcal{F}$ is a central problem in extremal combinatorics. While much is known about $\mathrm{ex}(n,\mathcal{F})$, the case $r \ge 3$ is significantly harder; very little is known about $\mathrm{ex}_r(n,\mathcal{F})$ in general. For further results before 2011, we refer to an excellent survey~\cite{KE11}.
 
Two natural generalizations of complete graphs to $r$-uniform hypergraphs play an important role in extremal hypergraph theory. For integers $\ell\ge r \ge 2$, the family $\mathcal{K}_{\ell+1}^{r}$ consists of all $r$-graphs $F$ with at most $\binom{\ell+1}{2}$ edges such that, for some $(\ell+1)$-set $K$, every pair $\{x,y\} \subseteq K$ is covered by an edge in $F$. 
In particular, $\mathcal{K}_{\ell+1}^{(2)}$ contains only the ordinary complete graph $K_{\ell+1}$. Another important hypergraph, denoted $H_{\ell+1}^{r}$, is obtained from the complete $2$-graph $K_{\ell+1}$ by enlarging each edge with a set of $r-2$ new vertices. When $r=2$, $H_{\ell+1}^{r}$ coincides with the complete graph $K_{\ell+1}$. Observe that $H_{\ell+1}^{r} \in \mathcal{K}_{\ell+1}^{r}$, so $H_{\ell+1}^{r}$ and any other graph in $\mathcal{K}_{\ell+1}^{r}$ can be regarded as natural generalizations of complete graphs. Mubayi~\cite{M06} determined the exact Tur\'{a}n number of $\mathcal{K}_{\ell+1}^{r}$ and characterized the extremal hypergraph for sufficiently large $n$. Let $\ell \ge r \ge 2$ be integers, and let $V_1 \cup \cdots \cup V_\ell$ be a partition of $[n]$ where each $V_i$ has size either $\lfloor n/\ell \rfloor$ or $\lceil n/\ell \rceil$. For simplicity, we write $n/\ell$ to denote the size of each part, ignoring floors and ceilings. The \emph{generalized Tur\'{a}n graph} $T_r(n,\ell)$ consists of all $r$-subsets of $[n]$ that contain at most one vertex from each $V_i$, and we denote $t_r(n,\ell) = |T_r(n,\ell)| \sim \binom{\ell}{r} \left(\frac{n}{\ell}\right)^r$.  

\begin{theorem}[Mubayi~\cite{M06}]\label{exact turan huaK}
Let $n \ge 1$ and $\ell \ge r \ge 2$ be integers. Then $$\mathrm{ex}_r(n,\mathcal{K}_{\ell+1}^{r}) = t_r(n,\ell),$$ with $T_r(n,\ell)$ being the unique extremal hypergraph for sufficiently large $n$.
\end{theorem}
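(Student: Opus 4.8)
The plan is to reduce Theorem~\ref{exact turan huaK} to the classical clique version of Tur\'an's theorem by passing to an auxiliary \emph{covering graph} that records which pairs of vertices are jointly contained in some edge of the hypergraph.

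For the lower bound I would check directly that $T_r(n,\ell)$ is $\mathcal{K}_{\ell+1}^r$-free. If some $F\in\mathcal{K}_{\ell+1}^r$ were a subhypergraph of $T_r(n,\ell)$, witnessed by an $(\ell+1)$-set $K$, then since the partition $V_1\cup\cdots\cup V_\ell$ has only $\ell$ parts, by pigeonhole two distinct $x,y\in K$ lie in a common part $V_i$; but the pair $\{x,y\}$ must be covered by some edge $e\in F\subseteq T_r(n,\ell)$, and such an $e$ would contain two vertices of $V_i$, contradicting the definition of $T_r(n,\ell)$. Hence $\mathrm{ex}_r(n,\mathcal{K}_{\ell+1}^r)\ge t_r(n,\ell)$.

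For the upper bound, let $G$ be a $\mathcal{K}_{\ell+1}^r$-free $r$-graph on $[n]$, and define a graph $H$ on $[n]$ by $\{x,y\}\in E(H)$ iff some edge of $G$ contains both $x$ and $y$. The key claim is that $H$ is $K_{\ell+1}$-free: were $K$ an $(\ell+1)$-clique of $H$, we could choose for each of the $\binom{\ell+1}{2}$ pairs $p\subseteq K$ an edge $e_p\in G$ with $p\subseteq e_p$, and then $F=\{e_p:p\in\binom{K}{2}\}$ would be a subhypergraph of $G$ with at most $\binom{\ell+1}{2}$ edges covering every pair inside $K$, i.e.\ $F\in\mathcal{K}_{\ell+1}^r$, contradicting the hypothesis. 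Next, every edge $e\in G$ spans a copy of $K_r$ in $H$ (each of its $\binom{r}{2}$ pairs is covered by $e$ itself), and distinct edges of $G$ give distinct $r$-cliques of $H$, so $|E(G)|$ is at most the number of copies of $K_r$ in $H$. Since $H$ is $K_{\ell+1}$-free, the classical clique-Tur\'an bound (Zykov) shows this is at most the number of copies of $K_r$ in $T(n,\ell)$; and the latter equals $t_r(n,\ell)$, because a $K_r$ in $T(n,\ell)$ is precisely a choice of $r$ of the $\ell$ parts together with one vertex from each — exactly the data of an edge of $T_r(n,\ell)$. Thus $|E(G)|\le t_r(n,\ell)$.

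For the uniqueness statement I would examine the equality case. If $|E(G)|=t_r(n,\ell)$, then $H$ is $K_{\ell+1}$-free with exactly $t_r(n,\ell)$ copies of $K_r$, so the uniqueness part of the clique-Tur\'an theorem forces $H=T(n,\ell)$; and since $E(G)$ injects into the $r$-cliques of $H$ with equal cardinalities, $G$ must be the set of all $r$-cliques of $T(n,\ell)$, i.e.\ $G=T_r(n,\ell)$. I expect the main obstacle to lie in this last step: one needs the sharp uniqueness form of the clique-Tur\'an theorem (that $T(n,\ell)$ is the \emph{unique} maximiser of the $K_r$-count among $K_{\ell+1}$-free graphs) together with the exclusion of degenerate configurations at the boundary, and it is precisely here that the hypothesis of sufficiently large $n$ enters, whereas the numerical identity $\mathrm{ex}_r(n,\mathcal{K}_{\ell+1}^r)=t_r(n,\ell)$ holds for all $n\ge 1$.
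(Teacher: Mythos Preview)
The paper does not contain its own proof of Theorem~\ref{exact turan huaK}; it is quoted from Mubayi~\cite{M06} and used as a black box (together with the stability Theorem~\ref{stability}) in the proof of Theorem~\ref{main theorem}. So there is no ``paper's proof'' to compare against.

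That said, your argument is essentially correct and is in fact very close to Mubayi's original proof of the numerical equality: pass to the $2$-shadow (your covering graph $H$), observe it is $K_{\ell+1}$-free, inject edges of $G$ into $r$-cliques of $H$, and invoke Zykov's theorem that $T(n,\ell)$ maximises the $K_r$-count among $K_{\ell+1}$-free graphs. This yields $\mathrm{ex}_r(n,\mathcal{K}_{\ell+1}^r)=t_r(n,\ell)$ for all $n$, exactly as stated. One small point worth tightening in your uniqueness analysis: you appeal to a ``sharp uniqueness form'' of Zykov's theorem, i.e.\ that $T(n,\ell)$ is the \emph{unique} $K_{\ell+1}$-free graph maximising the number of $K_r$'s. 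This is true (for $n\ge \ell$), but it is not the route Mubayi takes; in~\cite{M06} uniqueness for large $n$ is derived via the stability theorem (your paper's Theorem~\ref{stability}) followed by a cleaning argument, which is why the ``sufficiently large $n$'' hypothesis appears there. Your more direct route is valid, but you should either cite a reference for the uniqueness in Zykov's clique-count theorem or sketch why equality in the symmetrisation step forces the Tur\'an partition, rather than leaving it as an expected obstacle.
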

Mubayi~\cite{M06} also determined the Tur\'{a}n function of $H_{\ell+1}^{r}$ asymptotically. Later, Pikhurko~\cite{pikhurko2013Hl} obtained the exact value of $\mathrm{ex}_r(n,H_{\ell+1}^{r})$ for all sufficiently large $n$. In 2025, Hou, Li, Yang, Zeng, Zhang~\cite{HLYZZ25} have studied two kinds of stability theorems for $\mathcal{K}^r_{\ell}$-saturated hypergraphs. We call an $r$-graph $H$ a Berge-$G$ if there exists a bijection $f : E(G) \to E(H)$ such that $e \subseteq f(e)$ holds for every $e \in E(G)$. Gerbner and Palmer~\cite{gerbner2017extremalSIADM} investigated the Tur\'an number of Berge-$G$ hypergraphs when $G$ is an arbitrary graph. We note here that every Berge-$G$ necessarily has exactly $e(G)$ hyperedges due to the bijective correspondence in the definition. In particular, $H_{\ell+1}^r$ is a special instance of a Berge-$K_{\ell+1}$ hypergraph.
 
For an $r$-graph $\mathcal{H}$, the \emph{matching number} $\nu(\mathcal{H})$ is defined as the maximum number of pairwise disjoint edges in $\mathcal{H}$. Let $M_{s+1}^r$ denote a matching of size $s+1$, where each edge is an $r$-set. The Tur\'{a}n problem for matchings in hypergraphs has attracted considerable attention. In 1965, Erd\H{o}s~\cite{Erdos65} proposed the \emph{Erd\H{o}s Matching Conjecture}, which concerns the Tur\'{a}n number of $M_{s+1}^r$. The conjecture asserts that for sufficiently large $n$, the maximum number of edges in an $M_{s+1}^r$-free $r$-graph is achieved by taking all $r$-sets that intersect a fixed $s$-set. This conjecture was later confirmed for large $n$ by Frankl~\cite{Frankl13}, who proved the following exact result.

\begin{theorem}[Frankl~\cite{Frankl13}]
Let $r, s \ge 1$ and $n \ge (2s+1)r - s$. Then $$\mathrm{ex}_r(n, M_{s+1}^r) =  \binom{n}{r} - \binom{n-s}{r}.$$ Equality holds only for families isomorphic to $\mathcal{A}(n,r,s)$, where $\mathcal{A}(n,r,s)$ is the $r$-graph consisting of all hyperedges that intersect a fixed $s$-set.
\end{theorem}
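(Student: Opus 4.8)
The plan is to use the shifting (compression) method. Recall that for $i<j$ the $(i,j)$-shift turns a family of $r$-sets into another family of the same uniformity and cardinality whose matching number cannot increase, and that iterating these shifts over all pairs terminates in a \emph{shifted} family. Since $\mathcal{A}(n,r,s)$ is already shifted, it therefore suffices to prove $|\mathcal{F}|\le\binom{n}{r}-\binom{n-s}{r}$ under the extra assumption that $\mathcal{F}$ is shifted, and then to recover the uniqueness statement by a standard analysis of equality through the compression steps (using that the bound on the matching number is maintained).

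So assume $\mathcal{F}$ is shifted with $\nu(\mathcal{F})\le s$. The dichotomy to exploit is whether every edge of $\mathcal{F}$ meets $[s]=\{1,\dots,s\}$. If it does, then $\mathcal{F}\subseteq\mathcal{A}(n,r,s)$ and we are done, with equality only for $\mathcal{F}=\mathcal{A}(n,r,s)$. If not, then shiftedness forces $\{s+1,\dots,s+r\}\in\mathcal{F}$ and hence $\binom{[s+r]}{r}\subseteq\mathcal{F}$; in this ``non-star'' case the goal becomes the strict inequality $|\mathcal{F}|<\binom{n}{r}-\binom{n-s}{r}$, which will also settle uniqueness.

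To bound $|\mathcal{F}|$ I would induct on $r$ and on $n$, using the vertex-link decomposition $|\mathcal{F}|=|\mathcal{F}(\overline{n})|+|\mathcal{F}(n)|$, where $\mathcal{F}(\overline{n})=\{F\in\mathcal{F}:n\notin F\}$ and $\mathcal{F}(n)=\{F\setminus\{n\}:n\in F\in\mathcal{F}\}$. Both are shifted; $\mathcal{F}(\overline{n})$ is an $r$-graph on $[n-1]$ with $\nu\le s$, and once $n\ge(s+1)r+1$ any matching of $\mathcal{F}(n)$ can be completed to a matching of $\mathcal{F}$ by appending fresh small vertices (via shiftedness), so $\mathcal{F}(n)$ is an $(r-1)$-graph on $[n-1]$ with matching number at most $s$. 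Applying the induction hypotheses at $(n-1,r)$ and $(n-1,r-1)$ and using Pascal's identity yields
\[
|\mathcal{F}|\le\Big(\binom{n-1}{r}-\binom{n-1-s}{r}\Big)+\Big(\binom{n-1}{r-1}-\binom{n-1-s}{r-1}\Big)=\binom{n}{r}-\binom{n-s}{r}.
\]
The base cases are $s=0$ and $r=1$ (trivial), the case $r=2$ — which is precisely the Erd\H{o}s--Gallai theorem quoted above, whose star-type value $\binom{n}{2}-\binom{n-s}{2}$ is the maximum as soon as $n\ge3s+2=(2s+1)\cdot 2-s$ — and the bottom case $n=(2s+1)r-s$ of the induction on $n$.

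The main obstacle is this last base case, and more generally any step of the recursion in which $\nu(\mathcal{F}(\overline{n}))$ stays equal to $s$, so that the matching number never drops: there the estimate above loses an additive constant, essentially because $\mathcal{F}(\overline{n})$ cannot be large at the same time as the links being removed are large. Resolving it requires a finer structural study of shifted families with matching number $s$ near the threshold — roughly, one must show that such a family not contained in a star is no larger than the ``clique'' configuration $\binom{[(s+1)r-1]}{r}$, for instance by peeling off the largest vertices while controlling both the residual family and the accumulated links (which vanish once the residual family already fills a small clique). Since $\binom{(s+1)r-1}{r}\le\binom{n}{r}-\binom{n-s}{r}$ once $n\ge(2s+1)r-s$, this is exactly where the hypothesis on $n$ is used, and it simultaneously shows the clique configuration to be strictly sub-optimal, so that $\mathcal{A}(n,r,s)$ is the unique extremal hypergraph.
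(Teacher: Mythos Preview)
The paper does not prove this theorem. It is quoted in the Introduction as a known result of Frankl (cited to~\cite{Frankl13}) and is not used anywhere in the arguments of Sections~3 or~4; the authors only need the much weaker and elementary Lemmas~\ref{Hypergraph Vizing} and~\ref{Number of Max Degree is Bounded} to control $r$-graphs with bounded matching number. So there is nothing in the paper to compare your proposal against.

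Regarding the proposal on its own merits: your outline is essentially Frankl's original shifting argument, and the overall architecture (reduce to shifted families, split on whether $[s]$ is a cover, run the link/deletion induction on $n$ and $r$ via Pascal) is sound. You are also honest about where the real work lies: the base step $n=(2s+1)r-s$ and, more generally, the analysis when the matching number of $\mathcal{F}(\overline{n})$ does not drop. Your last paragraph, however, only gestures at this (``requires a finer structural study \dots\ roughly, one must show \dots'') without actually carrying it out; in Frankl's paper this is exactly the nontrivial part, handled through a careful weight/counting argument on shifted families rather than the informal ``peeling off'' you describe. So as a plan your proposal is on the right track, but as written it stops just short of the genuine difficulty.
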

Frankl, Nie, and Wang~\cite{frankl2026matchingdm} studied a variant of the Erd\H{o}s Matching Problem in the setting of random hypergraphs. Gerbner, Tompkins, and Zhou~\cite{GTZ2025} studied the analogous Tur\'{a}n problems for $r$-uniform hypergraphs with bounded matching number, and obtained exact results for certain hypergraphs. Wang, Wang, and Yang~\cite{WWY2025} showed that an $F_5$-free $3$-graph $H$ with matching number at most $s$ has at most $s \left\lfloor \frac{(n-s)^2}{4} \right\rfloor$ edges for $n \geq 30(s+1)$ and $s \geq 3$, where the generalized triangle $F_5$ denotes the $3$-graph on the vertex set $\{a,b,c,d,e\}$ with edge set $\{abc, abd, cde\}$. More recently, for the $3$-graph $F_{3,2}$ with edge set $\{123,145,245,345\}$, Chen, Liu, Qi and Yang~\cite{CLQY25} determined the exact value of $\ex(n,\{F_{3,2}, M_{s+1}^3\})$ for every integers $s$ and all $n \ge 12s^2$. 

In this paper, we first determine the exact value of $\mathrm{ex}(n, \{\mathcal{K}_{\ell+1}^r, M^r_{s+1}\}),$ and characterize the extremal hypergraph for sufficiently large $n$. This can be regarded as a generalization of the result of Alon and Frankl~\cite{AF24} to $r$-uniform hypergraphs. Before stating our main result, we introduce some basic definitions.

Given an $r$-graph $\mathcal{H}$, the \emph{link} of $v$ in $\mathcal{H}$ is $$L_{\mathcal{H}}(v)=\left\{A\in \binom{V(\mathcal{H})}{r-1}:\{v\}\cup A\in \mathcal{H}\right\}.$$ 
Clearly, $L_{\mathcal{H}}(v)$ can be considered as an $(r-1)$-graph. The \emph{degree} of $v$ is $d_{\mathcal{H}}(v) := |L_{\mathcal{H}}(v)|$. Next,  we provide a construction of the extremal hypergraph of $\{\mathcal{K}^r_{\ell+1}, M^r_{s+1}\}$. 
\begin{defn}\label{def of Gn,l,s,r}
    Let $\ell\geq r\geq 3$, $s\geq 1$ be integers and $n\geq s+\ell$. The $r$-graph $\mathcal{G}=\mathcal{G}(n,\ell,s,r)$ is defined as follows. 
    \begin{itemize}
    \item[(a)] The vertex set $|V(\mathcal{G})|=n$ and $V(\mathcal{G})=V_0\cup V_1\cup \cdots\cup V_{\ell-1}$, where $|V_0|=s$.
    \item[(b)] Every edge in $\mathcal{G}$ contains exactly one vertex from $V_0$ and the remaining $r-1$ vertices are distributed across distinct parts among $V_1, \dots, V_{\ell-1}$.
    
        \item[(c)] For each $v\in V_0$, the link of $v$ in $\mathcal{G}$ is $L_{\mathcal{G}}(v)=T_{r-1}(n-s,\ell-1)$.
    \end{itemize}
\end{defn}
One of our main results is as follows:
\begin{theorem}\label{main theorem}
Let $\ell \geq r \geq 3$ and $s \geq 1$ be integers. For $n$ sufficiently large, we have
$$
\mathrm{ex}_r(n,\{\mathcal{K}_{\ell+1}^r,M_{s+1}^r\}) = s \cdot t_{r-1}(n-s, \ell-1).  
$$
 Moreover, $\mathcal{G}(n,\ell,s,r)$ is the unique extremal hypergraph.
\end{theorem}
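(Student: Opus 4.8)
The plan is to establish the matching lower and upper bounds separately, with the upper bound occupying almost all of the work.

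For the lower bound it suffices to check that $\mathcal{G}=\mathcal{G}(n,\ell,s,r)$ is $\{\mathcal{K}_{\ell+1}^r, M_{s+1}^r\}$-free; it has exactly $s\cdot t_{r-1}(n-s,\ell-1)$ edges by construction. Every edge meets the $s$-set $V_0$, so $\nu(\mathcal{G})\le s$. For $\mathcal{K}_{\ell+1}^r$-freeness, the vertex set of $\mathcal{G}$ is the union of the $\ell$ parts $V_0,V_1,\dots,V_{\ell-1}$, so any $(\ell+1)$-set $K$ contains two vertices $x,y$ lying in a common part; if $x,y\in V_0$ then no edge contains both (each edge meets $V_0$ in exactly one vertex), and if $x,y\in V_i$ with $i\ge1$ then again no edge contains both (each edge meets $V_i$ in at most one vertex). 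In either case the pair $\{x,y\}$ is uncovered, so $\mathcal{G}$ contains no member of $\mathcal{K}_{\ell+1}^r$.

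For the upper bound, fix an extremal $\mathcal{H}$, so $e(\mathcal{H})\ge s\cdot t_{r-1}(n-s,\ell-1)=\Theta(n^{r-1})$. I would first record two easy facts. Fact one: for every vertex $v$, the link $L_{\mathcal{H}}(v)$ is $\mathcal{K}_\ell^{r-1}$-free, since a member of $\mathcal{K}_\ell^{r-1}$ inside $L_{\mathcal{H}}(v)$ on an $\ell$-set $K'$ would, after adjoining $v$ to each of its $(r-1)$-edges, witness a member of $\mathcal{K}_{\ell+1}^r$ in $\mathcal{H}$ on $K'\cup\{v\}$; hence by \cref{exact turan huaK}, $d_{\mathcal{H}}(v)\le t_{r-1}(n-1,\ell-1)$ and, for $m$ large, the restriction of $L_{\mathcal{H}}(v)$ to any $m$-set has at most $t_{r-1}(m,\ell-1)$ edges. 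Fact two: $\mathcal{H}$ is $\mathcal{K}_{\ell+1}^r$-free if and only if the \emph{covering shadow} $\partial\mathcal{H}$ — the graph on $V(\mathcal{H})$ with $xy\in\partial\mathcal{H}$ iff some hyperedge contains $x$ and $y$ — is $K_{\ell+1}$-free. Now let $U$ be the vertex set of a maximum matching, $|U|\le rs$, so every edge meets $U$; fix a threshold $D=\Theta(n^{r-2})$ with a large enough constant and put $V_0:=\{v:d_{\mathcal{H}}(v)\ge D\}$. A greedy argument that selects, through each chosen high-degree vertex in turn, an edge avoiding a bounded set of already-used vertices (each killing only $O(n^{r-2})$ of the $\ge D$ edges at that vertex) shows $|V_0|\le s$, since otherwise one builds $M_{s+1}^r$. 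If $|V_0|\le s-1$, then the edges through $V_0$ number at most $(s-1)t_{r-1}(n-1,\ell-1)$ while the edges avoiding $V_0$ meet $U\setminus V_0$ and hence number $O(n^{r-2})$, giving $e(\mathcal{H})<s\cdot t_{r-1}(n-s,\ell-1)$, a contradiction; so $|V_0|=s$. Finally the same greedy argument, applied to a hypothetical edge disjoint from $V_0$ together with one edge through each vertex of $V_0$, shows every edge of $\mathcal{H}$ meets $V_0$.

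The crux is to show that no edge of $\mathcal{H}$ contains two vertices of $V_0$. For $v\in V_0$ let $\mathcal{J}_v$ be the $(r-1)$-graph on $V(\mathcal{H})\setminus V_0$ consisting of the sets $e\setminus\{v\}$ over edges $e$ with $e\cap V_0=\{v\}$; then $\mathcal{J}_v\subseteq L_{\mathcal{H}}(v)$ is $\mathcal{K}_\ell^{r-1}$-free, so $|\mathcal{J}_v|\le t_{r-1}(n-s,\ell-1)$, while $e(\mathcal{H})\le\sum_{v\in V_0}|\mathcal{J}_v|+\binom{s}{2}\binom{n}{r-2}$. Comparing with $e(\mathcal{H})\ge s\cdot t_{r-1}(n-s,\ell-1)$ forces each $|\mathcal{J}_v|\ge t_{r-1}(n-s,\ell-1)-O(n^{r-2})$, that is, each $\mathcal{J}_v$ is within $o(n^{r-1})$ of extremal for \cref{exact turan huaK}. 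Using a stability strengthening of \cref{exact turan huaK} (which one would need to establish, e.g., from Mubayi's argument), each $\mathcal{J}_v$ agrees with a copy of $T_{r-1}(n-s,\ell-1)$ on some $(\ell-1)$-partition $\mathcal{P}^{(v)}$ of $V(\mathcal{H})\setminus V_0$ up to $o(n^{r-1})$ edges; in particular all but $O(1)$ vertices lie in the support of $\mathcal{J}_v$ and so are $\partial\mathcal{H}$-adjacent to $v$. If an edge $e^*$ contained $v_1,v_2\in V_0$, I would pick one vertex from each part of $\mathcal{P}^{(v_1)}$ while avoiding $e^*$ and the few "bad" vertices and pairs, obtaining an $(\ell-1)$-clique $K'$ of $\partial\mathcal{H}$ all of whose vertices are $\partial\mathcal{H}$-adjacent to both $v_1$ and $v_2$; since $e^*$ covers $\{v_1,v_2\}$, the $(\ell+1)$-set $\{v_1,v_2\}\cup K'$ would be a $K_{\ell+1}$ in $\partial\mathcal{H}$, contradicting Fact two. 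Hence every edge meets $V_0$ in exactly one vertex, so $e(\mathcal{H})=\sum_{v\in V_0}|\mathcal{J}_v|\le s\cdot t_{r-1}(n-s,\ell-1)$; equality with $e(\mathcal{G})$ forces $|\mathcal{J}_v|=t_{r-1}(n-s,\ell-1)$ for every $v$, so $\mathcal{J}_v=T_{r-1}(n-s,\ell-1)$ by the uniqueness in \cref{exact turan huaK}. A short argument then forces all the partitions $\mathcal{P}^{(v)}$ to coincide: if $\mathcal{P}^{(v_1)}\ne\mathcal{P}^{(v_2)}$, their common refinement has at least $\ell$ parts, and one vertex from each of $\ell$ of them gives $z_1,\dots,z_\ell$ that are pairwise $\partial\mathcal{H}$-adjacent (each pair lies in distinct parts of $\mathcal{P}^{(v_1)}$ or of $\mathcal{P}^{(v_2)}$) and all $\partial\mathcal{H}$-adjacent to $v_1$, again producing a $K_{\ell+1}$. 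Thus $\mathcal{H}\cong\mathcal{G}(n,\ell,s,r)$.

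I expect the main obstacle to be exactly this crux step: ruling out edges with two vertices in $V_0$ genuinely requires a stability version of Mubayi's theorem together with the bookkeeping that pins down $|V_0|=s$ and makes the links near-extremal. Everything else is routine once $n$ is large enough that the target $\Theta(n^{r-1})$ dominates the $O(n^{r-2})$ error terms throughout.
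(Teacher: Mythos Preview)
Your proposal is correct and tracks the paper's proof closely: isolate the $s$ high-degree vertices $V_0$, show every edge meets $V_0$ in exactly one vertex, apply Mubayi's exact result to each link, and force the partitions to coincide. Two remarks worth making. First, the stability strengthening you flag as something ``one would need to establish'' is already available --- it is Mubayi's stability theorem for $\mathcal{K}_\ell^{r-1}$, quoted in the paper as \cref{stability} --- so no extra work is needed there. Second, your crux step is actually a bit more economical than the paper's: the paper first proves an intermediate lemma (\cref{Partition are almost the same}) that the near-Tur\'an partitions $\mathcal{P}^{(v)}$ almost coincide across $v\in V_0$, and then works inside their common intersection to build the forbidden configuration; you bypass this by observing that near-extremality of $\mathcal{J}_{v_2}$ already forces all but $O(1)$ vertices of $U$ to be $\partial\mathcal{H}$-adjacent to $v_2$, which is enough to pick $K'$ from the parts of $\mathcal{P}^{(v_1)}$ alone. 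The paper also uses a hypergraph Vizing bound (\cref{Hypergraph Vizing}) where you use the maximum-matching cover; these are interchangeable and both give the required $O(n^{r-2})$ control on edges outside $V_0$.
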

A particularly important example of a $3$-uniform hypergraph is the Fano plane. Let $\mathbb{F}$ denote the Fano plane, i.e., $\mathbb{F}$ is a $3$-graph on $7$ vertices with edge set
$$
\{123, 345, 561, 174, 275, 376, 246\}.
$$
F\"{u}redi and Simonovits~\cite{FS05}, and independently Keevash and Sudakov~\cite{KS05}, determined the Tur\'{a}n number of the Fano plane for large $n$. In 2019, Bellmann and Reiher~\cite{BR19} provided a complete solution to Tur\'{a}n's hypergraph problem for the Fano plane. Specifically, they proved that for $n \ge 8$, among all $3$-graphs on $n$ vertices not containing the Fano plane, there is a unique extremal hypergraph achieving the maximum number of edges, namely the balanced, complete, bipartite hypergraph. Moreover, for $n = 7$, there exists exactly one other extremal configuration with the same number of edges: the hypergraph obtained from a clique of order $7$ by removing all five edges containing a fixed pair of vertices. Our next result determines $\mathrm{ex}(n, \{\mathbb{F}, M_{s+1}^3\})$ and, for sufficiently large $n$, also characterizes the corresponding extremal hypergraphs. We define the extremal hypergraph of $\{\mathbb{F}, M^3_{s+1}\}$ as in the following.
\begin{defn}
Let $X$ and $Y$ be two disjoint vertex sets with $|X|=s$ and $|Y|=n-s$. Define a $3$-graph $\mathcal{F}(n,s)$ on the vertex set $X \cup Y$, where the edge set is given by
$$
E(\mathcal{F}(n,s)) \coloneqq \{x_1x_2y \cup xy_1y_2 : x,x_1,x_2 \in X, y,y_1,y_2 \in Y\}
$$
\end{defn}
 \begin{theorem}\label{turan fano plane}
 Let $\mathbb{F}$ be the Fano plane. For $n \geq 20s(s+1)$,
 $$
\mathrm{ex}_3(n,\{\mathbb{F},M_{s+1}^3\}) = \binom{s}{2}(n-s) + s \cdot \binom{n-s}{2}. 
 $$
 Moreover, $\mathcal{F}(n,s)$ is the unique extremal hypergraph.
 \end{theorem}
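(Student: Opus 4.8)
A direct count of the two edge types of $\mathcal F(n,s)$ gives $\binom s2(n-s)+s\binom{n-s}2$ edges. Every edge meets the $s$-set $X$, so $\nu(\mathcal F(n,s))\le s$ and $\mathcal F(n,s)$ is $M^3_{s+1}$-free; every edge also meets $Y$, so the partition $(X,Y)$ properly $2$-colours $\mathcal F(n,s)$, and since $\mathbb F$ is not $2$-colourable, $\mathcal F(n,s)$ contains no Fano plane. This yields the claimed lower bound.

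\textbf{Upper bound.} Let $\mathcal H$ be an $n$-vertex $\{\mathbb F,M^3_{s+1}\}$-free $3$-graph with $n\ge 20s(s+1)$ and $e(\mathcal H)\ge \binom s2(n-s)+s\binom{n-s}2$; I will show $\mathcal H\cong\mathcal F(n,s)$. Fix a maximum matching $M$; then $\nu:=\nu(\mathcal H)\le s$, the set $W:=V(M)$ has $|W|\le 3s$, and every edge of $\mathcal H$ meets $W$. Put $X:=\{v\in V(\mathcal H):d_{\mathcal H}(v)>3sn\}$. A greedy ``one edge at a time'' argument shows that any $s+1$ vertices of degree exceeding $3sn$ would span a matching $M^3_{s+1}$ (at each step the new edge must avoid at most $3s$ previously used or reserved vertices, hence fewer than $3sn$ pairs of the relevant link), so $|X|\le s$. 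Write $k:=|X|$. Every edge disjoint from $X$ meets $W\setminus X$, so the number of such edges is at most $\sum_{w\in W\setminus X}d_{\mathcal H}(w)\le 3s\cdot 3sn=9s^2n$, whence $e(\mathcal H)\le\bigl(\binom n3-\binom{n-k}3\bigr)+9s^2n$. The right-hand side is increasing in $k$ and, for $k\le s-1$, one checks $\binom n3-\binom{n-s+1}3+9s^2n<\binom s2(n-s)+s\binom{n-s}2$ whenever $n\ge 20s(s+1)$; so $k=s$, say $X=\{x_1,\dots,x_s\}$. The same greedy construction forces every edge of $\mathcal H$ to meet $X$: an edge $e_0$ disjoint from $X$ could be extended, through $x_1,\dots,x_s$, to a matching of size $s+1$. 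Consequently $e(\mathcal H)\le\sum_{i=1}^s d_{\mathcal H}(x_i)$, and combined with the assumed lower bound this gives $\sum_{i=1}^s\bigl(\binom{n-1}2-d_{\mathcal H}(x_i)\bigr)\le\binom s2(n-2)$. In particular each link $L_{\mathcal H}(x_i)$, a graph on $V\setminus\{x_i\}$, misses at most $\binom s2(n-2)$ pairs, so for every $i$ all but at most $4s^2$ vertices $p$ are joined to $x_i$ in more than $\tfrac34 n-2$ edges.

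\textbf{Killing triples inside $X$.} It remains to show $\mathcal H$ has no edge inside $X$; granting this, $\mathcal H$ is contained in (hence, by the lower bound on $e(\mathcal H)$, equals) the family of all triples meeting $X$ in exactly one or two vertices, which is $\mathcal F(n,s)$, and we are done. Suppose $\{x_1,x_2,x_3\}\in\mathcal H$. Identify $\{x_1,x_2,x_3\}$ with one line of $\mathbb F$; I greedily choose distinct vertices $p_4,p_5,p_6,p_7\in V(\mathcal H)\setminus\{x_1,x_2,x_3\}$, in this order, so that the remaining six lines $x_1p_4p_5,\ x_1p_6p_7,\ x_2p_4p_6,\ x_2p_5p_7,\ x_3p_4p_7,\ x_3p_5p_6$ all lie in $\mathcal H$. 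Each choice must lie in an intersection of at most three codegree-neighbourhoods $N(x_i,p_j)=\{q:\{x_i,p_j,q\}\in\mathcal H\}$, and must avoid, for each $x_i$ it will later be paired with, the at most $4s^2$ vertices of codegree below $\tfrac34 n-2$ with $x_i$ (and the previously chosen $p_j$'s). Since each such $N(x_i,p_j)$ has size larger than $\tfrac34 n-2$, the set of valid choices at each step has size at least $\tfrac14 n-O(s^2)>0$ whenever $n\ge 20s(s+1)$. This embeds $\mathbb F$ in $\mathcal H$, contradicting $\mathbb F$-freeness.

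\textbf{Main obstacle.} The delicate point is the last step. The only degree information available is the aggregate bound $\sum_i\bigl(\binom{n-1}2-d_{\mathcal H}(x_i)\bigr)\le\binom s2(n-2)$, which permits a single $x_i$ to have codegree deficit of order $s^2n$; one must nevertheless run the four-step greedy embedding of $\mathbb F$ without ever getting stuck, which forces careful bookkeeping of which codegrees remain above $\tfrac34 n$ and is precisely where the hypothesis $n\ge 20s(s+1)$ is consumed. A secondary balancing act is the choice of the threshold defining $X$ (taken here as $3sn$): it must be small enough that ``few edges avoid $X$'' rules out $k\le s-1$, yet large enough for the matching-extension arguments to succeed --- two requirements that are compatible only because $n$ is large compared with $s^2$.
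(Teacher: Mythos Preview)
Your outline is correct and reaches the same conclusion as the paper, but the two key steps are argued differently.

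For the coarse count forcing $|X|=s$, the paper bounds $|\mathcal G[Y]|$ via the hypergraph Vizing inequality $\chi'(\mathcal G[Y])\le r(\Delta-1)+1$, whereas you use the vertex set $W$ of a maximum matching as a cover: every edge missing $X$ hits $W\setminus X$, giving at most $9s^2n$ such edges. Your route is more elementary (no auxiliary colouring lemma) and yields essentially the same numerical slack.

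The substantive divergence is in ruling out a triple inside $X$. The paper observes that if $\{x_1,x_2,x_3\}\in\mathcal H$, then the graph on $Y$ of pairs $\{y_1,y_2\}$ forming an edge with \emph{each} of $x_1,x_2,x_3$ must be $K_4$-free (a $K_4$ together with $\{x_1,x_2,x_3\}$ would realise all seven lines of $\mathbb F$); Tur\'an's bound then forces at least $\tfrac16(n-s)(n-s-3)$ missing ``one-in-$X$'' edges, which beats the $\binom{s}{3}$ possible triples in $X$ and contradicts maximality. You instead exploit the global deficit bound $\sum_i\bigl(\binom{n-1}{2}-d_{\mathcal H}(x_i)\bigr)\le\binom{s}{2}(n-2)$ to show each $x_i$ has at most $4s^2$ ``bad'' partners of codegree below $\tfrac34n-2$, and then greedily embed the remaining six Fano lines by choosing $p_4,p_5,p_6,p_7$ in intersections of at most three codegree-neighbourhoods; the final intersection has size at least $\tfrac{n}{4}-O(1)$. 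This works under the same hypothesis $n\ge 20s(s+1)$ (the case $s\le 2$ being vacuous since $|X|<3$). The paper's argument is a clean one-line structural reduction plus Tur\'an; yours is a hands-on embedding that avoids any auxiliary extremal result but requires the codegree bookkeeping you flag as the ``main obstacle''. Both are valid, and your version has the merit of being entirely self-contained.
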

The remainder of the paper is organized as follows. In Section~2, we provide additional definitions and several useful lemmas. In Section~3, we present the proof of Theorem~\ref{main theorem}. In Section~4, we give the proof of Theorem~\ref{turan fano plane} and discuss the case of $H_{\ell+1}^r$.
 
\section{Preliminaries}\label{SEC:prelim}

Let $r \geq 3$ and let $\mathcal{H} = (V, E)$ be an $r$-graph. The number of edges of $\mathcal{H}$ is denoted as $|\mathcal{H}|$. For a vertex $x \in V$, let $d_{\mathcal{H}}(x)$ denote the number of $r$-edges in $\mathcal{H}$ containing $x$. The maximum degree $\Delta(\mathcal{H})$ of $\mathcal{H}$ is defined as the maximum value among all $d_{\mathcal{H}}(x)$. We call subsets $X, Y \subseteq V(\mathcal{H})$ \emph{strongly independent} and \emph{weakly independent}, respectively, 
if no two vertices in $X$ lie in a common hyperedge, and $Y$ contains no hyperedge of $\mathcal{H}$. For a subset $A \subseteq V(\mathcal{H})$ and $i\in [r]$, we define $d_A(x_1,\cdots,x_i)$ as the number of hyperedges containing the vertices $x_1,\cdots,x_i$ such that the remaining $r-i$ vertices all lie in $A$. More formally,
$$
d_A(x_1,\cdots,x_i) = \big|\{e \in E(\mathcal{H}) : \{x_1,\cdots,x_i\} \subseteq e \text{ and } e \setminus \{x_1,\cdots,x_i\} \subseteq A\}\big|.
$$
Furthermore, for $x \in V(\mathcal{H})$, we denote by $L_A(x)$ the link of $x$ restricted to $A$. For any $\ell \ge r$, let $V_1, V_2, \dots, V_\ell$ be pairwise disjoint vertex classes. We write $T_r(V_1, V_2, \dots, V_\ell)$ for the complete $\ell$-partite $r$-uniform hypergraph, where every hyperedge contains at most one vertex from each $V_i$.  For a positive integer $k$, a mapping $\phi: E \to [k]$ is called a proper edge $k$-coloring of $\mathcal{H}$ if for any two edges $e_1, e_2 \in E$, $\phi(e_1) = \phi(e_2)$ implies $e_1 \cap e_2 = \emptyset$. The edge chromatic number $\chi'(\mathcal{H})$ is defined as the smallest positive integer $k$ for which $\mathcal{H}$ admits a proper edge $k$-coloring. The following lemma can be regarded as a hypergraph version of Vizing's Theorem.
\begin{lemma}\label{Hypergraph Vizing}
Let $r \geq 3$ and let $\mathcal{H}$ be an $r$-graph. Then
$\chi'(\mathcal{H}) \leq (\Delta(\mathcal{H}) - 1)r + 1.$
\end{lemma}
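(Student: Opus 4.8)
The plan is to translate the statement into a vertex-colouring question on the \emph{line graph} $L(\mathcal{H})$, whose vertex set is $E(\mathcal{H})$ and in which two hyperedges are adjacent precisely when they intersect. By the definition of a proper edge colouring, a proper edge $k$-colouring of $\mathcal{H}$ is exactly a proper vertex $k$-colouring of $L(\mathcal{H})$, so $\chi'(\mathcal{H}) = \chi(L(\mathcal{H}))$, and it suffices to colour $L(\mathcal{H})$ with $(\Delta(\mathcal{H}) - 1)r + 1$ colours.

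Next I would bound the maximum degree of $L(\mathcal{H})$. Fix a hyperedge $e = \{v_1, \dots, v_r\} \in E(\mathcal{H})$. Every hyperedge $e' \neq e$ adjacent to $e$ in $L(\mathcal{H})$ contains at least one vertex $v_i$, and for each fixed $i \in [r]$ the number of hyperedges other than $e$ through $v_i$ is $d_{\mathcal{H}}(v_i) - 1 \le \Delta(\mathcal{H}) - 1$. Summing over $i \in [r]$ (which may overcount hyperedges meeting $e$ in more than one vertex, and hence only inflates the estimate) gives $\deg_{L(\mathcal{H})}(e) \le r(\Delta(\mathcal{H}) - 1)$, so $\Delta(L(\mathcal{H})) \le r(\Delta(\mathcal{H}) - 1)$.

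Finally I would invoke the elementary greedy bound $\chi(G) \le \Delta(G) + 1$ applied to $G = L(\mathcal{H})$: list the hyperedges in an arbitrary order and colour them one at a time, each time choosing a colour not yet used on any previously coloured neighbour, of which there are at most $r(\Delta(\mathcal{H}) - 1)$. This produces a proper edge colouring of $\mathcal{H}$ with $r(\Delta(\mathcal{H}) - 1) + 1 = (\Delta(\mathcal{H}) - 1)r + 1$ colours. I do not expect a genuine obstacle here: unlike the graph version of Vizing's theorem, no augmenting-path or recolouring argument is needed, since we are aiming only for a bound of order $r\Delta$ rather than one within an additive constant of $\Delta$. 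The only mild point to be careful about is the degree count in the line graph when $e$ shares several vertices with another hyperedge, and this is handled by the harmless overcounting noted above.
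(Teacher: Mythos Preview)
Your proposal is correct and follows essentially the same argument as the paper: pass to the line graph $L(\mathcal{H})$, bound its maximum degree by $r(\Delta(\mathcal{H})-1)$ via the sum $\sum_{x\in e}(d_{\mathcal{H}}(x)-1)$, and apply the greedy bound $\chi(L)\le \Delta(L)+1$. There is nothing to add.
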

\begin{proof}
    We define the auxiliary graph $L = L(\mathcal{H})$, also known as the \emph{line graph} of $\mathcal{H}$, whose vertex set $V(L) = E(\mathcal{H})$. For any two elements $e_1, e_2 \in E(\mathcal{H})$, $e_1$ and $e_2$ form an edge in $L$ if and only if $e_1 \cap e_2 \neq \emptyset$. Note that $L$ is a $2$-graph (an ordinary graph).  It is easy to see that $\chi'(\mathcal{H}) = \chi(L)$. From the definition of $L$, we observe that every proper vertex $k$-coloring of $L$ corresponds bijectively to a proper edge $k$-coloring of $\mathcal{H}$. Therefore, we have
$$
\chi'(\mathcal{H}) = \chi(L) \leq \Delta(L) + 1.
$$
For any vertex $e \in V(L)$ (which corresponds to an edge in $\mathcal{H}$), its degree $d_L(e)$ equals the number of edges in $E(\mathcal{H})$ that have a non-empty intersection with $e$. Moreover, for any vertex $x \in e$ in the original hypergraph, we have $d_{\mathcal{H}}(x) \leq \Delta(\mathcal{H})$. Therefore,
$$
d_L(e)\leq \sum_{x\in e}(d_{\mathcal{H}}(x)-1)\leq r(\Delta(\mathcal{H})-1)
$$
holds for each $e\in V(L)$. Thus, $\chi'(\mathcal{H})\leq r(\Delta(\mathcal{H})-1)+1$.
\end{proof}

The following lemma demonstrates that in an $r$-graph $\mathcal{H}$ with a given matching number, the number of high-degree vertices cannot be too large.

\begin{lemma}\label{Number of Max Degree is Bounded}
Let $\mathcal{H}$ be an $r$-graph with matching number $\nu(\mathcal{H})=s$. Then the number of vertices with degree exceeding $r(s+1)n^{r-2}$ is at most $s$.
\end{lemma}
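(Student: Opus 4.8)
The plan is to argue by contradiction: suppose $\mathcal{H}$ has at least $s+1$ vertices, say $v_1, \dots, v_{s+1}$, each of degree exceeding $r(s+1)n^{r-2}$. I will greedily extract a matching of size $s+1$ with one edge through each $v_i$, contradicting $\nu(\mathcal{H}) = s$.

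First I would build the matching one edge at a time. Suppose for some $0 \le j \le s$ we have already selected pairwise disjoint edges $e_1, \dots, e_j$ with $v_i \in e_i$ for $1 \le i \le j$; let $U = e_1 \cup \cdots \cup e_j$, so $|U| \le rj \le rs$. I want to pick an edge $e_{j+1}$ containing $v_{j+1}$ and disjoint from $U$. The number of edges through $v_{j+1}$ that meet $U$ is at most $\sum_{u \in U \setminus \{v_{j+1}\}} d_{\mathcal{H}}(v_{j+1}, u)$, where $d_{\mathcal{H}}(v_{j+1}, u)$ counts edges containing both $v_{j+1}$ and $u$. Since such an edge is determined by choosing its remaining $r-2$ vertices from $V(\mathcal{H})$, we have $d_{\mathcal{H}}(v_{j+1}, u) \le \binom{n}{r-2} \le n^{r-2}$. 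Hence the number of ``bad'' edges through $v_{j+1}$ is at most $|U| \cdot n^{r-2} \le rs \cdot n^{r-2} < r(s+1) n^{r-2} < d_{\mathcal{H}}(v_{j+1})$, so at least one edge through $v_{j+1}$ avoids $U$ entirely; choose it as $e_{j+1}$.

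Iterating this for $j = 0, 1, \dots, s$ produces pairwise disjoint edges $e_1, \dots, e_{s+1}$, i.e.\ a matching of size $s+1$ in $\mathcal{H}$, contradicting $\nu(\mathcal{H}) = s$. Therefore at most $s$ vertices can have degree exceeding $r(s+1) n^{r-2}$.

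The only mild subtlety — not really an obstacle — is bookkeeping the pairwise-codegree bound $d_{\mathcal{H}}(v_{j+1}, u) \le n^{r-2}$ and checking the arithmetic $rs \cdot n^{r-2} < r(s+1)\cdot n^{r-2}$ survives with room to spare; both are immediate. One should also note the statement is vacuous unless $n$ is at least, say, $r(s+1)$ or so, but since the degree bound $r(s+1)n^{r-2}$ may exceed $\binom{n}{r-1}$ for small $n$ the claim still holds trivially in that regime (no vertex has such high degree), so no separate case analysis is needed.
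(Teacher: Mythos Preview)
Your approach is essentially the paper's: both argue by contradiction, fix $s+1$ high-degree vertices $v_1,\dots,v_{s+1}$, and greedily build an $(s+1)$-matching, using the trivial codegree bound $d_{\mathcal H}(v_{j+1},u)\le n^{r-2}$ to show that few edges through $v_{j+1}$ meet the forbidden set.

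There is one small but genuine gap. At step $j+1$ you only forbid the set $U=e_1\cup\cdots\cup e_j$, but nothing in your construction prevents some earlier $e_i$ from containing $v_{j+1}$; if that happens then $v_{j+1}\in U$, and \emph{every} edge through $v_{j+1}$ meets $U$, so no valid $e_{j+1}$ exists. Your bound $\sum_{u\in U\setminus\{v_{j+1}\}} d_{\mathcal H}(v_{j+1},u)$ silently drops this case rather than handling it. The paper avoids this by additionally requiring each selected edge to be disjoint from the whole set $V_0=\{v_1,\dots,v_{s+1}\}$. With that extra condition, at step $j+1$ the forbidden set (outside $v_{j+1}$) has size at most $(r-1)j+s\le rs$, since $U$ then meets $V_0$ only in $\{v_1,\dots,v_j\}$; your inequality $rs\cdot n^{r-2}<r(s+1)n^{r-2}<d_{\mathcal H}(v_{j+1})$ still holds and the argument goes through.
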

\begin{proof}
Denote the vertices with degree exceeding $r(s+1)n^{r-2}$ as $X$. Suppose, on the contrary, that $|X| > s$. Let $V_0 = \{v_1, v_2, \ldots, v_{s+1}\}$ be a subset of $V$. For each $v_i \in V_0$, $i\in [s+1]$, we iteratively select $u_{i_1}\cdots u_{i_{r-1}} \in L_{\mathcal{H}}(v_i)$ such that
\begin{itemize}
    \item $\{u_{i_1},\cdots, u_{i_{r-1}}\} \cap V_0 = \emptyset$, and
    \item None of $\{u_{i_1},\cdots, u_{i_{r-1}}\}$ is selected in any of the previous steps.
\end{itemize}

At each step $i$, there are at most $r(i-1)n^{r-2}+sn^{r-2}$ edges in $L_{\mathcal{H}}(v_i)$ that contain a vertex already used in the earlier selections or in $V_0$. Since $d_{\mathcal{H}}(v_i) \geq r(s+1)n^{r-2}+1$,  such an edge $u_{i_1}\cdots u_{i_{r-1}}$ always exists. This gives a matching of size $s+1$, contradicting the assumption.  
\end{proof}

We also need the stability result for $\mathcal{K}_{\ell+1}^r$-free $r$-graph given by Mubayi \cite{M06}.
\begin{theorem}[Mubayi~\cite{M06}]\label{stability}
Fix $\ell \geq r \geq 2$ and $\delta >0$. Then there exists an $\epsilon >0$ and an $n_0$ such that the following holds for all $n > n_0$, if $G$ is an $n$-vertex $\mathcal{K}_{\ell+1}^r$-free $r$-graph with at least $t_r(n,\ell)-\epsilon n^r$ edges, then $G$ can be transformed to $T_r(n,\ell)$ by adding and deleting at most $\delta n^r$ edges.
\end{theorem}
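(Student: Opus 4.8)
\textbf{Proof proposal for Theorem~\ref{turan fano plane}.}

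The plan is to combine the structural rigidity of Fano-free hypergraphs with the matching-number constraint, using a stability-plus-cleaning argument. First I would record the lower bound: $\mathcal{F}(n,s)$ is $\mathbb{F}$-free because it is "two-sided" — every edge has either two vertices in $X$ and one in $Y$, or one in $X$ and two in $Y$ — and the Fano plane, being $3$-chromatic, cannot be embedded into a hypergraph whose vertex set splits into two parts with no edge inside either part (the balanced complete bipartite $3$-graph is exactly the extremal Fano-free construction of Bellmann--Reiher, and $\mathcal{F}(n,s)$ is a sub-hypergraph of it after identifying $X$ as part of one side); moreover $\nu(\mathcal{F}(n,s)) \le s$ since every edge meets $X$ and $|X|=s$. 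Counting edges gives exactly $\binom{s}{2}(n-s) + s\binom{n-s}{2}$, matching the claimed value.

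For the upper bound, let $\mathcal{H}$ be an $\mathbb{F}$-free $3$-graph on $n$ vertices with $\nu(\mathcal{H})\le s$ and $|\mathcal{H}|$ maximum. Since $\nu(\mathcal{H})\le s$, by the greedy argument in \cref{Number of Max Degree is Bounded} there is a vertex cover $W$ of size at most $3s$ (take a maximal matching, of $\le s$ edges, and let $W$ be its vertex set, $|W|\le 3s$); every edge of $\mathcal{H}$ meets $W$. I would first show $|W|$ must in fact behave like a set of size roughly $s$ of "heavy" vertices: if a vertex $v\in W$ has small degree (say $d_{\mathcal{H}}(v) = o(n^2)$), deleting $v$ loses few edges, so in an extremal configuration all covering vertices are heavy, and \cref{Number of Max Degree is Bounded} forces at most $s$ heavy vertices. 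So there is a set $X$, $|X|=s$ (we may assume equality by the lower bound's edge count — otherwise we are below the target), with every edge of $\mathcal{H}$ meeting $X$, and $Y := V(\mathcal{H})\setminus X$ is \emph{weakly independent}: $\mathcal{H}$ has no edge inside $Y$.

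Now decompose $\mathcal{H}$ by how edges meet $X$. Edges with $\ge 2$ vertices in $X$ number at most $\binom{s}{2}(n-s) + \binom{s}{3}$, a negligible excess over $\binom{s}{2}(n-s)$; the bulk consists of edges $\{x\}\cup\{y_1,y_2\}$ with $x\in X$, $y_1,y_2\in Y$. For each $x\in X$ let $G_x := L_{\mathcal{H}}(x)\cap\binom{Y}{2}$, a graph on $Y$; the main term of $|\mathcal{H}|$ is $\sum_{x\in X}|G_x|$, and the target forces $\sum_x |G_x| \le s\binom{n-s}{2}$ up to lower-order terms, i.e.\ on average each $G_x$ is nearly complete. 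The core of the proof is to rule out any $x$ with $|G_x|$ significantly below $\binom{n-s}{2}$ \emph{and} to show that no $G_x$ being complete creates a Fano plane. For the latter: a Fano plane in $\mathcal{H}$ uses vertices from $X\cup Y$; since $Y$ is weakly independent, every Fano edge hits $X$, and since $|X|=s$ could be large this alone is not a contradiction, so I would use the finer structure — the Fano plane has the property that its $7$ vertices cannot be $2$-colored with all edges bichromatic, hence any copy in $\mathcal{H}$ must have $\ge 3$ of its vertices in $X$ on a common... here is where the real work lies. The \textbf{main obstacle} is exactly this Fano-forbidding step: showing that if each link-graph $G_x$ is "large" (nearly complete on $Y$) and the edges within $X$-pairs are present, then $\mathcal{H}$ contains a Fano plane unless $\mathcal{H}\subseteq\mathcal{F}(n,s)$. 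I would attack it by: (i) applying the Bellmann--Reiher / Füredi--Simonovits--Keevash--Sudakov stability for the Fano plane to $\mathcal{H}$ restricted appropriately (or a direct supersaturation/link argument), concluding each $G_x$ is itself close to a complete graph or close to bipartite; (ii) using that $\nu(\mathcal{H})\le s$ and the covering set $X$ has size exactly $s$ to eliminate the "bipartite inside $Y$" alternative for link graphs, since that would waste too many edges; (iii) a local analysis: if some $G_x$ misses even one pair $\{y_1,y_2\}$, build a Fano plane using $x$, another heavy vertex $x'$, and well-chosen $Y$-vertices from the dense link graphs (the abundance of edges through $X$ gives enough room to realize all $7$ Fano edges, each hitting $X$ in one or two vertices in the pattern $\mathcal{F}(n,s)$ would also allow — so one must instead produce a Fano whose $X$-trace is \emph{not} of the two-sided type, exploiting the missing pair to force a "bad" third configuration). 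Once $\mathcal{H}\subseteq\mathcal{F}(n,s)$ is forced on the bulk, maximality and the exact edge count pin down $\mathcal{H}=\mathcal{F}(n,s)$, giving both the extremal value and uniqueness. The condition $n\ge 20s(s+1)$ should emerge from requiring $n$ large enough that: the cover-size bound $3s \ll n$, the greedy matching-extension argument of \cref{Number of Max Degree is Bounded} runs, and the supersaturation counts in step (iii) beat the lower-order error terms.
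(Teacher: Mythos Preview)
First, a mismatch: the statement you were asked about is Mubayi's stability theorem (Theorem~\ref{stability}), which the paper merely \emph{cites} from~\cite{M06} and does not prove. Your proposal instead addresses Theorem~\ref{turan fano plane} (the Fano plane with bounded matching). I will compare your proposal to the paper's actual proof of Theorem~\ref{turan fano plane}.

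Your overall architecture matches the paper's: isolate a set $X$ of $s$ high-degree vertices, show $Y=V\setminus X$ is weakly independent via the greedy matching-extension argument, and then control the contribution of edges meeting $X$. (The paper defines $X$ directly as the set of vertices of degree at least $3(s+1)n+1$, rather than going through a maximal-matching cover, but this is a minor variation.)

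The genuine gap is in your ``main obstacle'' step. You correctly identify that the crux is ruling out extra structure beyond $\mathcal{F}(n,s)$, but your proposed tools miss the mark. Applying Fano stability to $\mathcal{H}$ is a non-starter: stability results kick in near the extremal number $\mathrm{ex}_3(n,\mathbb{F})=\Theta(n^3)$, while here $|\mathcal{H}|=O(sn^2)$, so $\mathcal{H}$ is nowhere near the stability regime. Your local analysis (iii), built around a missing pair in some link graph $G_x$, also points in the wrong direction: the paper does \emph{not} try to force every $G_x$ to be complete. Instead, the decisive step is to show that $X$ itself is weakly independent, i.e.\ there is no edge $\{x_1,x_2,x_3\}\subseteq X$. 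The paper's key observation is that if such an edge exists, then the \emph{common} link graph
\[
G_1 \;=\; \bigl\{\{y_1,y_2\}\subseteq Y : \{x_i,y_1,y_2\}\in\mathcal{H}\text{ for all }i=1,2,3\bigr\}
\]
must be $K_4$-free --- because a $K_4$ in $G_1$ on vertices $\{y_1,y_2,y_3,y_4\}$, together with the edge $\{x_1,x_2,x_3\}$, assembles into a Fano plane on the seven vertices $\{x_1,x_2,x_3,y_1,y_2,y_3,y_4\}$ (each of the six pairs $\{y_i,y_j\}$ gets assigned to one $x_k$ according to the Fano incidence). By Tur\'an's theorem $|E(G_1)|\le t^2/3$ where $t=|V(G_1)|$, and a short calculation shows this forces at least $\tfrac{1}{6}(n-s)(n-s-3)$ missing crossing edges of type $\{x_i,y,y'\}$, which for $n\ge 20s(s+1)$ overwhelms the possible gain of $\binom{s}{3}$ from edges inside $X$. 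Once both $X$ and $Y$ are weakly independent, $\mathcal{H}\subseteq\mathcal{F}(n,s)$ and maximality finishes.

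So the missing idea is precisely this ``common link of a triple in $X$ is $K_4$-free'' observation, which replaces your stability/supersaturation route with a one-line Tur\'an bound.
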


\section{Proof of Theorem~\ref{main theorem}}
In this section, we give a proof of Theorem~\ref{main theorem}. We always assume that $n$ is sufficiently large. Let $\mathcal{H}$ be an $n$-vertex $\mathcal{K}_{\ell+1}^r$-free $r$-graph with matching number $\nu(\mathcal{H}) \leq s$ and with the maximum number of edges. Since $\mathcal{G}$ is $\mathcal{K}_{\ell+1}^r$-free and $\nu(\mathcal{G})\leq s$. We have that $|\mathcal{H}| \geq |\mathcal{G}|$. First, we give some observations of $\mathcal{H}$.
\begin{lemma}\label{Link graph is Klr-free}
For any $v\in V(\mathcal{H})$, the link graph $L_{\mathcal{H}}(v)$ is $\mathcal{K}_{\ell}^{r-1}$-free.
\end{lemma}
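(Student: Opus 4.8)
The plan is a one-step ``lifting'' argument by contradiction. Suppose the statement fails, so there is a vertex $v \in V(\mathcal{H})$ whose link $L_{\mathcal{H}}(v)$ contains a copy of some member $F \in \mathcal{K}_\ell^{r-1}$. By the definition of $\mathcal{K}_\ell^{r-1}$, there is an $\ell$-set $K$ such that every pair $\{x,y\}\subseteq K$ is covered by an edge of $F$, and $F$ has at most $\binom{\ell}{2}$ edges. Every edge of $L_{\mathcal{H}}(v)$ is an $(r-1)$-subset of $V(\mathcal{H})\setminus\{v\}$, so every vertex appearing in $F$ is different from $v$; since $\ell\ge 2$, each vertex of $K$ lies in some edge of $F$, hence $v\notin K$ and $K':=K\cup\{v\}$ is an $(\ell+1)$-set.

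Next I would push $F$ back up into $\mathcal{H}$. Set $F' := \{\,\{v\}\cup A : A\in E(F)\,\}$. Each $\{v\}\cup A$ is an edge of $\mathcal{H}$ by the definition of the link, and $A\mapsto \{v\}\cup A$ is injective, so $F'$ is a sub-$r$-graph of $\mathcal{H}$ with $|E(F')| = |E(F)| \le \binom{\ell}{2}\le\binom{\ell+1}{2}$. I then claim that $K'$ witnesses $F'\in\mathcal{K}_{\ell+1}^r$. Indeed, a pair $\{x,y\}\subseteq K$ is covered by $\{v\}\cup A\in F'$ whenever $A\in E(F)$ covers $\{x,y\}$; and a pair $\{v,x\}$ with $x\in K$ is handled by taking any $y\in K\setminus\{x\}$ (which exists since $\ell\ge r\ge 3$) and any $A\in E(F)$ covering $\{x,y\}$: then $\{v\}\cup A$ contains both $v$ and $x$. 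Thus every pair inside the $(\ell+1)$-set $K'$ is covered by an edge of $F'$, so $F'\in\mathcal{K}_{\ell+1}^r$, contradicting the hypothesis that $\mathcal{H}$ is $\mathcal{K}_{\ell+1}^r$-free.

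I do not expect any genuine obstacle here. The only points needing a moment of care are that $v\notin K$, so that $K'$ really has $\ell+1$ vertices, and the observation that one need not create any additional edges through $v$: the lifted edges $\{v\}\cup A$ already cover all the pairs $\{v,x\}$ with $x\in K$, and the edge bound $|E(F')|\le\binom{\ell+1}{2}$ is immediate (indeed not tight).
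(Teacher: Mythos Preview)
Your argument is correct and is essentially the same lifting-by-contradiction as the paper's proof: take a member of $\mathcal{K}_\ell^{r-1}$ in the link with core $K$, enlarge each edge by $v$, and observe that $K\cup\{v\}$ serves as a core for a member of $\mathcal{K}_{\ell+1}^r$ in $\mathcal{H}$. You have simply spelled out the details (that $v\notin K$, the edge count, and the case split on pairs in $K'$) that the paper leaves implicit.
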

\begin{proof}
    Assume by contradiction that $L_{\mathcal{H}}(v)$ contains a copy $S$ as a member of $\mathcal{K}_{\ell}^{r-1}$. Let $K$ be the core of $S$. Then by enlarging every edge of $S$ to contain $v$, we obtain a copy $S'\in \mathcal{K}_{\ell+1}^{r}$ and $K\cup\{v\}$ is the core of $S'$, a contradiction. 
\end{proof}

Define
$$
V_0 := \left\{v\in V(\mathcal{H})\colon d_{\mathcal{H}}(v) \geq  r(s+1)n^{r-2}+1 \right\}
$$
and let $U = V(\mathcal{H})\setminus V_0$. Next, we use Theorem~\ref{exact turan huaK} and Lemma~\ref{Hypergraph Vizing} to determine the size of $V_0$. 
\begin{lemma}\label{lem:upper bound V0}
We have $|V_0|=s$.
\end{lemma}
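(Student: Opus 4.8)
\textit{Plan.} The two inequalities $|V_0|\le s$ and $|V_0|\ge s$ will be handled separately. The bound $|V_0|\le s$ is immediate: by construction $V_0$ is exactly the set of vertices of $\mathcal{H}$ whose degree exceeds $r(s+1)n^{r-2}$, so $|V_0|\le s$ by Lemma~\ref{Number of Max Degree is Bounded}. The content is the reverse inequality $|V_0|\ge s$, which I will prove by contradiction. Suppose $|V_0|\le s-1$; I will show $|\mathcal{H}|<|\mathcal{G}|$, contradicting the maximality of $\mathcal{H}$, which gives $|\mathcal{H}|\ge|\mathcal{G}|$ (and $|\mathcal{G}|=s\cdot t_{r-1}(n-s,\ell-1)$, directly from Definition~\ref{def of Gn,l,s,r}, since every edge of $\mathcal{G}$ contains exactly one vertex of $V_0$ and each such vertex has link $T_{r-1}(n-s,\ell-1)$).

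To bound $|\mathcal{H}|$, partition its edges into those meeting $V_0$ and those contained in $U=V(\mathcal{H})\setminus V_0$. Every edge meeting $V_0$ is counted in $\sum_{v\in V_0}d_{\mathcal{H}}(v)$, and by Lemma~\ref{Link graph is Klr-free} each link $L_{\mathcal{H}}(v)$ is a $\mathcal{K}_{\ell}^{r-1}$-free $(r-1)$-graph on $n-1$ vertices, so Theorem~\ref{exact turan huaK} applied with parameters $r-1$ and $\ell-1$ (valid since $\ell\ge r\ge 3$) yields $d_{\mathcal{H}}(v)=|L_{\mathcal{H}}(v)|\le t_{r-1}(n-1,\ell-1)$; hence the edges meeting $V_0$ number at most $(s-1)\,t_{r-1}(n-1,\ell-1)$. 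For the edges inside $U$, take a maximum matching $M$ of $\mathcal{H}[U]$; since $\nu(\mathcal{H}[U])\le\nu(\mathcal{H})\le s$, the set $V(M)$ has at most $rs$ vertices, and by maximality every edge of $\mathcal{H}[U]$ meets $V(M)$. As each $v\in U$ has $d_{\mathcal{H}}(v)\le r(s+1)n^{r-2}$ by definition of $U$, we get $|\mathcal{H}[U]|\le rs\cdot r(s+1)n^{r-2}=O(n^{r-2})$. Altogether $|\mathcal{H}|\le (s-1)\,t_{r-1}(n-1,\ell-1)+O(n^{r-2})$.

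Finally, compare with $|\mathcal{G}|=s\,t_{r-1}(n-s,\ell-1)$. Writing $c=\binom{\ell-1}{r-1}/(\ell-1)^{r-1}>0$, the floor/ceiling estimate gives $t_{r-1}(m,\ell-1)=c\,m^{r-1}+O(m^{r-2})$, so both $t_{r-1}(n-1,\ell-1)$ and $t_{r-1}(n-s,\ell-1)$ equal $c\,n^{r-1}+O(n^{r-2})$. Therefore
$$|\mathcal{G}|-|\mathcal{H}|\ \ge\ s\bigl(c\,n^{r-1}+O(n^{r-2})\bigr)-(s-1)\bigl(c\,n^{r-1}+O(n^{r-2})\bigr)-O(n^{r-2})\ =\ c\,n^{r-1}+O(n^{r-2}),$$
which is positive for all sufficiently large $n$. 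This is the desired contradiction, so $|V_0|\ge s$, and combined with $|V_0|\le s$ we conclude $|V_0|=s$.

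\textit{Main obstacle.} None of the steps is long once the decomposition of $E(\mathcal{H})$ is set up; the one point needing care is the final asymptotic comparison—verifying that the genuine $\Theta(n^{r-1})$ gap between the "$s$-part" construction $\mathcal{G}$ and the "$(s-1)$-part" upper bound dominates both the floor/ceiling discrepancies in $t_{r-1}$ and the $O(n^{r-2})$ contribution of $\mathcal{H}[U]$. It is also essential here to use the improved link bound $d_{\mathcal{H}}(v)\le t_{r-1}(n-1,\ell-1)$ coming from Lemma~\ref{Link graph is Klr-free} and Theorem~\ref{exact turan huaK}, rather than the trivial bound $\binom{n-1}{r-1}$, since the latter is too weak to beat $|\mathcal{G}|$ when $\ell$ is close to $r$.
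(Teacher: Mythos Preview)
Your proof is correct and follows essentially the same strategy as the paper: split $E(\mathcal{H})$ into edges meeting $V_0$ and edges in $U$, bound the former via Lemma~\ref{Link graph is Klr-free} plus Theorem~\ref{exact turan huaK}, bound the latter by $O(n^{r-2})$, and compare with $|\mathcal{G}|$. The only minor difference is that the paper bounds $|\mathcal{H}[U]|$ using the edge-chromatic inequality $|\mathcal{H}[U]|\le \chi'(\mathcal{H}[U])\cdot\nu(\mathcal{H}[U])$ together with Lemma~\ref{Hypergraph Vizing}, whereas you use the more elementary observation that the vertices of a maximum matching cover all edges---both yield the same $r^2s(s+1)n^{r-2}$ bound.
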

\begin{proof}
Suppose, on the contrary, that $|V_0| \leq s-1$. By Lemma \ref{Hypergraph Vizing}, we obtain that
\begin{align*}
|\mathcal{H}[U]|&\leq \chi'(\mathcal{H}[U])\cdot \nu(\mathcal{H}[U])\\ &\leq (r(\Delta(\mathcal{H}[U])-1) +1 )s  \\
&\leq r^2s(s+1)n^{r-2}+s.
\end{align*}
 Now we consider the $(r-1)$-graph $L_{\mathcal{H}}(v)$ for each $v \in V_0$. Recall that $\mathcal{H}$ is $\mathcal{K}_{\ell+1}^r$-free and $\nu (\mathcal{H})\leq s$. By Lemma~\ref{Link graph is Klr-free}, $L_{\mathcal{H}}(v)$ is $\mathcal{K}_\ell^{r-1}$-free. Then by Theorem \ref{exact turan huaK}, we get that for each $v \in V_0$, $$|L_{\mathcal{H}}(v)| \leq t_{r-1}(n-1, \ell-1) \leq \binom{\ell-1}{r-1} \left(\frac{n-1}{\ell-1}\right)^{r-1}.$$ Therefore, 
\begin{align*}
|\mathcal{H}| &\leq \sum_{v\in V_0}d_{\mathcal{H}}(v) + |\mathcal{H}[U]|  \\
&\leq (s-1)\binom{\ell-1}{r-1} \left(\frac{n-1}{\ell-1}\right)^{r-1} + r^2s(s+1)n^{r-2}+s \\
&<s\binom{\ell-1}{r-1} \left(\frac{n-s}{\ell-1}\right)^{r-1}=|\mathcal{G}|
\end{align*}
when $n$ is large enough. Hence, $|V_0|\geq s$ and by Lemma~\ref {Number of Max Degree is Bounded}, we have $|V_0|=s$.
\end{proof}
Next, we aim to obtain more structural details of $U$.
\begin{lemma}\label{U is Weakly independent set}
$U$ is a weakly independent set.
\end{lemma}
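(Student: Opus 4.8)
The plan is to argue by contradiction, exploiting the fact that each vertex of $V_0$ has degree at least $r(s+1)n^{r-2}+1$ while $|V_0|=s$ exactly (by Lemma~\ref{lem:upper bound V0}): if $U$ contained an edge $e$, then one could greedily extend $\{e\}$ by $s$ further edges, one through each vertex of $V_0$, to produce a matching of size $s+1$ in $\mathcal{H}$, contradicting $\nu(\mathcal{H})\le s$.

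Concretely, suppose $e\subseteq U$ is an edge of $\mathcal{H}$ and write $V_0=\{v_1,\dots,v_s\}$. I would construct pairwise disjoint edges $f_1,\dots,f_s$ with $v_j\in f_j$ and $f_j\setminus\{v_j\}\subseteq U$, one at a time. At step $j$, the already-forbidden vertex set $F:=V_0\cup e\cup f_1\cup\cdots\cup f_{j-1}$ has size at most $s+r+(j-1)(r-1)\le sr+1$, so the number of $(r-1)$-subsets of $V(\mathcal{H})$ that meet $F$ is at most $(sr+1)\binom{n-1}{r-2}\le (sr+1)n^{r-2}$. Since $d_{\mathcal{H}}(v_j)=|L_{\mathcal{H}}(v_j)|\ge r(s+1)n^{r-2}+1>(sr+1)n^{r-2}$ (using $r\ge 3$), there is an $(r-1)$-set $A_j\in L_{\mathcal{H}}(v_j)$ disjoint from $F$; put $f_j:=\{v_j\}\cup A_j$. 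Since $A_j$ avoids all of $V_0$ as well as $e\cup f_1\cup\cdots\cup f_{j-1}$, and since $v_j\notin e$ and $v_j\ne v_i$ for $i<j$, the edges $e,f_1,\dots,f_j$ remain pairwise disjoint.

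After step $s$ we obtain pairwise disjoint edges $e,f_1,\dots,f_s$, a matching of size $s+1$ in $\mathcal{H}$, which contradicts $\nu(\mathcal{H})\le s$; hence $U$ is weakly independent. There is no real obstacle here: the argument is a single greedy sweep and uses neither the extremality of $\mathcal{H}$ nor its $\mathcal{K}_{\ell+1}^r$-freeness. The only points requiring care are the bookkeeping --- it is essential that each chosen $A_j$ be required to avoid all of $V_0$, not merely the previously selected edges, so that a later choice of $v_i$ cannot collide with $f_j$ --- and verifying that the crude estimate $(sr+1)n^{r-2}<r(s+1)n^{r-2}+1$ indeed holds for all $r\ge 3$ and $s\ge 1$.
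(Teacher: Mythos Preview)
Your argument is correct and matches the paper's own proof, which likewise supposes an edge $e\in\mathcal{H}[U]$ and then, ``by a similar proof of Lemma~\ref{Number of Max Degree is Bounded}'', greedily selects $s$ pairwise disjoint edges through the vertices of $V_0$ in the remaining vertex set to obtain a matching of size $s+1$. Your write-up simply spells out that greedy sweep explicitly; the bookkeeping and the crude forbidden-set count $(sr+1)n^{r-2}<r(s+1)n^{r-2}+1$ are fine (in fact $r\ge 2$ already suffices).
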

\begin{proof}
If there exists $e \in \mathcal{H}[U]$. Let $U'$ be the vertex set after deleting the vertices of $e$ from $U$. By a similar proof of Lemma \ref{Number of Max Degree is Bounded}, we can find $s$ matching in $\mathcal{H}[V_0 \cup U']$. Then this yields $(s+1)$ matching together with $e$, a contradiction.
\end{proof}
To obtain clearer structure of $U$, we state that the degree of every vertex in $V_0$ is quite large. For convenience, set $V_0 = \{v_1,\cdots,v_s\}$.
\begin{lemma}\label{lem:lower bound of v in Y}
There exists a constant $c=c(r,s,\ell)$ such that for each $v \in V_0$, $$d_U(v) \geq \binom{\ell-1}{r-1}\left(\frac{n-s}{\ell-1}\right)^{r-1}-cn^{r-2}.$$ 
\end{lemma}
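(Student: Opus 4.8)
The plan is to play the lower bound $|\mathcal{H}|\ge|\mathcal{G}|=s\cdot t_{r-1}(n-s,\ell-1)$ off against a degree-based upper bound for $|\mathcal{H}|$ supported on $V_0$, and then observe that no single vertex of $V_0$ can have degree much below the ``average''.

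First note that, since $U$ is weakly independent by Lemma~\ref{U is Weakly independent set}, every edge of $\mathcal{H}$ contains at least one vertex of $V_0$; hence
\[
|\mathcal{H}|\ \le\ \sum_{v\in V_0}d_{\mathcal{H}}(v).
\]
On the other hand, for each $v\in V_0$ the link $L_{\mathcal{H}}(v)$ is $\mathcal{K}_{\ell}^{r-1}$-free (Lemma~\ref{Link graph is Klr-free}), so Theorem~\ref{exact turan huaK} gives $d_{\mathcal{H}}(v)=|L_{\mathcal{H}}(v)|\le t_{r-1}(n-1,\ell-1)$. Fixing one $v\in V_0$ and isolating its term in the displayed inequality,
\[
d_{\mathcal{H}}(v)\ \ge\ |\mathcal{H}|-\sum_{u\in V_0\setminus\{v\}}d_{\mathcal{H}}(u)\ \ge\ s\cdot t_{r-1}(n-s,\ell-1)-(s-1)\,t_{r-1}(n-1,\ell-1).
\]
Since $t_{r-1}(m,\ell-1)=\binom{\ell-1}{r-1}\big(\tfrac{m}{\ell-1}\big)^{r-1}+O(m^{r-2})$ with the implied constant depending only on $r,\ell$, the difference $t_{r-1}(n-1,\ell-1)-t_{r-1}(n-s,\ell-1)$ is nonnegative and of order $O(n^{r-2})$ with a constant depending only on $r,s,\ell$; substituting yields
\[
d_{\mathcal{H}}(v)\ \ge\ t_{r-1}(n-s,\ell-1)-O(n^{r-2})\ \ge\ \binom{\ell-1}{r-1}\Big(\tfrac{n-s}{\ell-1}\Big)^{r-1}-O(n^{r-2}).
\]

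Finally, to pass from $d_{\mathcal{H}}(v)$ to $d_U(v)$, note that the edges through $v$ not counted by $d_U(v)$ are exactly those meeting $V_0\setminus\{v\}$, of which there are at most $(s-1)\binom{n-2}{r-2}=O(n^{r-2})$. Subtracting this and absorbing every error term into a single constant $c=c(r,s,\ell)$ gives the claim. There is no genuine obstacle here: the only point needing care is the bookkeeping of the $O(n^{r-2})$ terms, in particular verifying $|\mathcal{G}|=s\cdot t_{r-1}(n-s,\ell-1)$ from Definition~\ref{def of Gn,l,s,r} (each vertex of $V_0$ has link $T_{r-1}(n-s,\ell-1)$ and every edge of $\mathcal{G}$ meets $V_0$ in exactly one vertex), and that the rounding estimate $\binom{\ell-1}{r-1}\big(\tfrac{m}{\ell-1}\big)^{r-1}-O(m^{r-2})\le t_{r-1}(m,\ell-1)\le \binom{\ell-1}{r-1}\big(\tfrac{m}{\ell-1}\big)^{r-1}+O(m^{r-2})$ holds, which is all that is used above.
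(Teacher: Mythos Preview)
Your proof is correct and follows essentially the same averaging strategy as the paper: both combine the lower bound $|\mathcal{H}|\ge|\mathcal{G}|$ with the Tur\'an-type upper bound on each link to force every $d_U(v)$ to be within $O(n^{r-2})$ of the target. The only cosmetic difference is that the paper works directly with $\sum_{v\in V_0}d_U(v)$ (bounding the contribution of edges meeting $V_0$ in $\ge 2$ vertices by $\sum_{k\ge 2}\binom{s}{k}n^{r-k}$) and argues by contradiction, whereas you first bound $d_{\mathcal{H}}(v)$ via $|\mathcal{H}|\le\sum_{v\in V_0}d_{\mathcal{H}}(v)$ and convert to $d_U(v)$ at the end.
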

\begin{proof}
First, we calculate the lower bound of $\sum_{v \in V_0}d_U(v)$. Clearly, for any $k\geq 2$ and $k$ vertices $v_{i_1},\cdots,v_{i_k}\in V_0$, we have 
$d_{U}(v_{i_1},\cdots,v_{i_k})\leq n^{r-k}.$ Combining 
\begin{align*}
|\mathcal{H}| &\leq \sum_{v_i \in V_0}d_U(v_i)+ \sum_{(v_i,v_j)\in \binom{V_0}{2}}d_U(v_i,v_j) + \cdots + \sum_{(v_{i_1},\cdots,v_{i_r}) \in \binom{V_0}{r}}1\\ &\leq\sum_{v_i\in V_0}d_{U}(v_i)+\sum_{k=2}^{r}\binom{s}{k}n^{r-s},
\end{align*}
and
$$
|\mathcal{H}| \geq |\mathcal{G}| = s\binom{\ell-1}{r-1}\left(\frac{n-s}{\ell-1}\right)^{r-1},
$$
we obtain that there exists a constant $c_1< 2^{s}$ such that 
\begin{align}\label{equ:lower of sum degree v}
\sum_{v_i \in V_0}d_U(v_i) \geq s\binom{\ell-1}{r-1}\left(\frac{n-s}{\ell-1}\right)^{r-1} - c_1n^{r-2}.
\end{align}
Choose $c\geq 2^{s}$. Suppose without loss of generality that $d_U(v_1) \leq \binom{\ell-1}{r-1}\left(\frac{n-s}{\ell-1}\right)^{r-1}-cn^{r-2}$.
Then
\begin{align}\label{equ:upper of sum degree v}
\sum_{v_i\in V_0}d_U(v_i) &\leq d_U(v_1) + \sum_{v_i \in V_0 \setminus v_1}d_U(v) \notag \\
& \leq \binom{\ell-1}{r-1}\left(\frac{n-s}{\ell-1}\right)^{r-1}-cn^{r-2} + (s-1)\binom{\ell-1}{r-1}\left(\frac{n-s}{\ell-1}\right)^{r-1}\notag  \\&\leq s\binom{\ell-1}{r-1}\left(\frac{n-s}{\ell-1}\right)^{r-1}-cn^{r-2}.
\end{align}
The inequalities \eqref{equ:lower of sum degree v} and \eqref{equ:upper of sum degree v} imply a contradiction by the choice of $c$ and $c_1$.
\end{proof}
Since each $L_U(v)$ is $\mathcal{K}_\ell^{r-1}$-free for $v \in V_0$, we may apply Theorem~\ref{stability} and Lemma~\ref{lem:lower bound of v in Y}, to conclude that $L_U(v)$ can be converted to $T_{r-1}(n-s,\ell-1)$ by adding or deleting at most $o(n^{r-1})$ edges. More precisely, for every $v_i \in V_0$, the set $U$ admits a partition $V_{1}^{i}\cup\cdots\cup V_{\ell-1}^{i}$ satisfying $|V_k^{i}| = \frac{1+o(1)}{\ell-1}n$ for $k \in [\ell-1]$, where the number of crossing $(r-1)$-hyperedges (those containing exactly one vertex from each part) is at least $t_{r-1}(n-s,\ell-1) - o(n^{r-1})$. Next, we prove that for any two vertices $v_i, v_j \in V_0$, their induced partitions of $U$ are nearly identical. Specifically, we can establish the following lemma.

\begin{lemma}\label{Partition are almost the same}
    For any two distinct vertices $v_i, v_j \in V_0$ and any integers $p, q \in [\ell-1]$, the intersection size satisfies either $|V_p^i \cap V_q^j| = o(n) $ or $|V_p^i \cap V_q^j| = \left(\frac{1+o(1)}{\ell-1}\right)(n-s).$
\end{lemma}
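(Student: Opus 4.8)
The plan is to argue by contradiction, showing that if the two induced partitions of $U$ are substantially different, then $\mathcal{H}$ contains a copy of $\mathcal{K}_{\ell+1}^r$, contradicting the hypothesis on $\mathcal{H}$. Suppose the conclusion fails: for some fixed $\epsilon>0$ there are arbitrarily large $n$ for which some cell $W:=V_p^i\cap V_q^j$ of the common refinement satisfies $\epsilon n<|W|<\bigl(\tfrac{1}{\ell-1}-\epsilon\bigr)n$. Since $|V_p^i|=\tfrac{1+o(1)}{\ell-1}(n-s)$, the set $V_p^i\setminus W$ then also has $\Omega(n)$ vertices, so row $p$ of the common refinement contains at least two cells of size $\Omega(n)$; combining this with one cell of size $\Omega(n)$ in each of the remaining $\ell-2$ rows (each part $V_{p'}^i$ has $\Omega(n)$ vertices and is split into at most $\ell-1$ cells), we obtain $\ell$ pairwise distinct cells $C_1,\dots,C_\ell$ of the common refinement, each of size at least $\delta n$ for some $\delta=\delta(\epsilon,\ell)>0$. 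The point of this step is the combinatorial observation that ``substantially different'' partitions force $\ell$ large refinement cells.

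Next I would set up the coverage structure. Call a pair $\{u,u'\}\subseteq U$ \emph{$t$-usable}, for $t\in\{i,j\}$, if it extends to an edge of $L_U(v_t)$, i.e. some edge of $\mathcal{H}$ through $v_t$ contains both $u$ and $u'$. Using that $L_U(v_t)$ differs from $T_{r-1}(V_1^t,\dots,V_{\ell-1}^t)$ in only $o(n^{r-1})$ edges (established above), a counting argument shows the number of pairs lying in distinct parts of the $v_t$-partition that fail to be $t$-usable is $o(n^2)$: each such bad pair loses all of its $\Theta(n^{r-3})$ potential crossing extensions, and the total number of missing crossing edges is $o(n^{r-1})$. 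The same $o(n^{r-1})$ bound also gives that all but $o(n)$ vertices of $U$ have degree $\Omega(n^{r-2})$, hence positive degree, in $L_U(v_i)$. Deleting from each $C_m$ these $o(n)$ low-degree vertices leaves at least $\tfrac{\delta}{2}n$ vertices in each.

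The plan is then to choose a transversal $u_1\in C_1,\dots,u_\ell\in C_\ell$ such that every pair $\{u_a,u_b\}$ is $i$-usable or $j$-usable. Since $C_a$ and $C_b$ are distinct cells of the common refinement, $u_a$ and $u_b$ lie in different parts of the $v_i$-partition or in different parts of the $v_j$-partition, so if the pair were useless it would be a bad pair for the corresponding index, and there are only $o(n^2)$ such bad pairs in total. A first-moment argument, choosing each $u_m$ uniformly in the pruned $C_m$, yields such a transversal, since the expected number of bad pairs among the chosen vertices is at most $\binom{\ell}{2}\cdot o(n^2)/(\delta n/2)^2=o(1)$. Setting $K:=\{v_i,u_1,\dots,u_\ell\}$, every pair $\{v_i,u_m\}$ is covered by an edge of $\mathcal{H}$ through $v_i$ (as $u_m$ has positive $L_U(v_i)$-degree), and every pair $\{u_a,u_b\}$ is covered by an edge through $v_i$ or through $v_j$; picking one covering edge per pair produces a subhypergraph of $\mathcal{H}$ with at most $\ell+\binom{\ell}{2}=\binom{\ell+1}{2}$ edges in which every pair of the $(\ell+1)$-set $K$ is covered, i.e. a copy of $\mathcal{K}_{\ell+1}^r$ — the desired contradiction. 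Note it is essential that $K$ uses only the single core vertex $v_i$, so that the pair $\{v_i,v_j\}$ (which need not be covered in the extremal $\mathcal{H}$) never has to be handled.

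I expect the main obstacle to be the bookkeeping in the second step: one must verify that the $o(n^{r-1})$ approximation of each link converts to an $o(n^2)$ bound on bad pairs (dividing by the $\Theta(n^{r-3})$ extensions of a fixed crossing pair) and to an $o(n)$ bound on low-degree vertices (dividing by the $\Theta(n^{r-2})$ typical degree), uniformly in the parameters, so that the cells $C_m$ retain $\Omega(n)$ vertices after pruning and the first-moment computation is valid. The rest — the combinatorial observation of Step 1 and the reduction to a single core vertex — are the conceptual ingredients, and everything else is routine.
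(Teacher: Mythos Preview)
Your proposal is correct and follows essentially the same approach as the paper: both arguments select $\ell$ vertices in $U$---two from the same part $V_p^i$ but different parts of the $v_j$-partition (so that this pair must be covered through $v_j$), and one from each of the remaining $\ell-2$ parts $V_{p'}^i$---and take $K=\{v_i\}$ together with these $\ell$ vertices as the core of a forbidden member of $\mathcal{K}_{\ell+1}^r$. The paper's execution differs only cosmetically: it samples $u_2,\dots,u_{\ell-1}$ from the full parts $V_2^i,\dots,V_{\ell-1}^i$ rather than from refinement cells, and it names three explicit events (two guaranteeing that every pair except $\{u_{1,1},u_{1,2}\}$ is covered through $v_i$, and one guaranteeing an edge through $v_j$ covers that remaining pair) in place of your uniform ``usability'' framework and first-moment count.
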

\begin{proof}
Assume for contradiction and without loss of generality that there exists $\alpha \in (0,1)$ such that for some indices $i,j$, the intersection satisfies $\left|V_1^{i} \cap V_1^{j}\right| = \left(\frac{\alpha + o(1)}{\ell-1}\right)(n-s)$. Define $V_{1,1}^{i} = V_{1}^{i} \cap V_{1}^{j}$ and $V_{1,2}^{i} = V_{1}^{i} \setminus V_{1,1}^{i}$ These satisfy $|V_{1,1}^{i}| = \frac{\alpha + o(1)}{\ell-1}(n-s)$ and consequently $|V_{1,2}^{i}| = \frac{1-\alpha + o(1)}{\ell-1}(n-s)$.
For each $k \in \{1,2\}$, the number of crossing $(r-1)$-hyperedges among $V_{1,k}^{i}, V_{2}^{i}, \ldots, V_{\ell-1}^{i}$ equals $$\left| T_{r-1}\left(V_{1,k}^{i}, V_{2}^{i}, \ldots, V_{\ell-1}^{i}\right) \right| + o(n^{r-1}).$$
Now select vertices uniformly and independently at random: $u_{1,1} \in V_{1,1}^{i}$, $u_{1,2} \in V_{1,2}^{i}$, $u_2 \in V_2^i$, $\ldots$, $u_{\ell-1} \in V_{\ell-1}^i$. Define three events as followings: event $A$ occurs when $\{u_{1,1},u_2,\ldots,u_{\ell-1}\}$ forms a $T_{r-1}(\ell-1,\ell-1)$ in $L_U(v_i)$; event $B$ occurs when $\{u_{1,2},u_2,\ldots,u_{\ell-1}\}$ forms such a hypergraph; and event $C$ occurs when there exist hyperedges in $\mathcal{H}$ containing $v_j,u_{1,1}$ and $u_{1,2}$.

\begin{claim}\label{Probability_Partitions_are_same_Claim}
We have (a) $\mathbb{P}(A) \geq 1 - o(1)$, (b) $\mathbb{P}(B) \geq 1 - o(1)$, and (c) $\mathbb{P}(C) \geq 1 - o(1)$.
\end{claim}

\begin{proof}[Proof of Claim~\ref{Probability_Partitions_are_same_Claim}]
Notice that we have already separated $U$ into $\ell$ parts, i.e. $V_{1,1}^i, V_{1,2}^i, V_2^i, \cdots, V_{\ell-1}^i$. For any fixed $r-1$ parts which do not include $V_{1,2}^{i}$ and any vertices $\{x_1,\ldots,x_{r-1}\}$ selected uniformly and independently from these parts, we consider two cases. If $V_{1,1}^{i}$ is chosen, then suppose, say $x_1 \in V_{1,1}^i$, we obtain that the probability\begin{align}\label{Same Inequality to prove c}
\mathbb{P}({x_1x_2\cdots x_{r-1} \in L_{U}(v_i) })\geq 1-\frac{o(n^{r-1})}{\alpha\left(\frac{n-s}{\ell-1}\right)^{r-1} + o(n^{r-1})}=1-o_{\ell,\alpha,r}(1).
\end{align}
Otherwise, we have that
$$\begin{aligned}
\mathbb{P}({x_1x_2\cdots x_{r-1} \in L_{U}(v_i) })\geq 1-\frac{o(n^{r-1})}{\left(\frac{n-s}{\ell-1}\right)^{r-1} + o(n^{r-1})}=1-o_{\ell,\alpha,r}(1).
\end{aligned}$$
Applying this to all $\binom{\ell-1}{r-1}$ possible $(r-1)$-tuples gives $$
\begin{aligned}
\mathbb{P}\left(A\right) 
&\geq 1 - \sum_{\substack{x_1,\cdots,x_{r-1} \\ \subseteq \{u_{1,1},u_2,\cdots,u_{\ell-1}\}}} \mathbb{P}\left(\{x_1x_2\cdots x_{r-1}\} \notin L_U(v_i)\right) \\
&\geq 1 - \binom{\ell-1}{r-1} \cdot o_{\ell,\alpha,r}(1) = 1 - o(1).
\end{aligned}
$$ The proof for (b) follows similarly by replacing $V_{1,1}^{i}$ with $V_{1,2}^{i}$, and $\alpha$ with $1-\alpha$.

For (c), let $V^k_{1,2} = V_{1,2} \cap V_k^j$ for $2\leq k \leq \ell-1$. Then we can calculate that 
\begin{align*}
\mathbb{P}(C) &= \sum^{\ell-1}_{k=2}\mathbb{P}(C|u_{1,2} \in V^k_{1,2}) \cdot \mathbb{P}(u_{1,2} \in V^k_{1,2})  \\
&\geq\sum^{\ell-1}_{k=2} \left(1- \frac{o(n^{r-1})}{\binom{\ell-1}{r-3}|V_{1,1}||V^k_{1,2}|\left(\frac{n-s}{\ell-1}\right)^{r-2} + o(n^{r-1})} \right) \cdot \frac{|V^k_{1,2}|}{|V_{1,2}|} \\
&= 1-o(1).
\end{align*}
\end{proof}
 By Claim~\ref{Probability_Partitions_are_same_Claim}, there exists a selection $u_{1,1}, u_{1,2}, u_2, \cdots, u_{\ell-1}$ where events $A$, $B$, and $C$ all occur. Let $e \in E(\mathcal{H})$ be a hyperedge containing $\{v_j, u_{1,1}, u_{1,2}\}$ (guaranteed by $C$). Constructing $F$ by adding $e$ to $\mathcal{H}[K]$ where $K = \{v_i, u_{1,1}, u_{1,2}, u_2, \ldots, u_{\ell-1}\}$ yields a contradiction that $F$ is contained in $\mathcal{K}_{\ell+1}^r$ with $K$ as its core, violating our initial assumptions. 
\end{proof}
By Lemma~\ref{Partition are almost the same}, every vertex $v \in V_0$ induces an almost identical partition of $U$. Without loss of generality, we assume that for any $v_i, v_j \in V_0$ and $p, q \in [\ell-1]$, if $p = q$ then $|V_{p}^{i} \cap V_{q}^{j}| = \frac{1 + o(1)}{\ell-1}(n - s)$, otherwise $|V_{p}^{i} \cap V_{q}^{j}| = o(n)$. We now show that $V_0$ forms a strongly independent set in $\mathcal{H}$. 
\begin{lemma}\label{V0 is strongly independent set}
   For any $e\in \mathcal{H}$, $|e\cap V_0|\leq 1$.
\end{lemma}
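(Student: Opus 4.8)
The plan is to argue by contradiction: suppose some edge $e \in \mathcal{H}$ contains two vertices of $V_0$, say $v_1, v_2 \in e$. The strategy is to build a copy of a member of $\mathcal{K}_{\ell+1}^r$ whose core $K$ is an $(\ell+1)$-set consisting of one vertex of $V_0$ together with $\ell$ vertices chosen one from each of the (almost common) parts $V_1^1, \dots, V_{\ell-1}^1$, plus a second vertex in one of those parts, with the extra pair covered by the edge $e$. Concretely, I would pick $v_1 \in V_0$ as the "hub", and then try to select vertices $u_1, \dots, u_{\ell-1}$ with $u_k \in V_k^1$ (for $k \in [\ell-1]$) together with an extra vertex $u_1' \in V_1^1$, so that: (i) every pair among $\{u_1, \dots, u_{\ell-1}\}$ and every pair of the form $\{u_1', u_k\}$ is covered by an edge of $\mathcal{H}$ through $v_1$ (using that $L_U(v_1)$ is close to $T_{r-1}(n-s,\ell-1)$, as established after Lemma~\ref{Partition are almost the same}), and (ii) the pair $\{u_1, u_1'\}$ — the only remaining uncovered pair in the $(\ell+1)$-set $K = \{v_1, u_1, u_1', u_2, \dots, u_{\ell-1}\}$ — is covered by an edge containing both $v_2$ and, if needed, only vertices already available. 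The edge $e$ itself (or an edge through $v_2$) handles that last pair.

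The key steps, in order, would be: first, fix the hub $v_1 \in V_0 \cap e$ and recall from the discussion following Lemma~\ref{Partition are almost the same} that $L_U(v_1)$ contains all but $o(n^{r-1})$ of the crossing $(r-1)$-edges of $T_{r-1}(V_1^1, \dots, V_{\ell-1}^1)$; second, set up a random selection of one vertex from each part (and a second independent vertex $u_1'$ from $V_1^1$) exactly as in the proof of Lemma~\ref{Partition are almost the same}, and show by a union bound over the $O(1)$ relevant $(r-1)$-tuples that with probability $1-o(1)$ all the required pairs through $v_1$ are covered — this is the event "$K \setminus \{u_1 u_1' \text{ pair}\}$ is realized"; third, handle the pair $\{u_1, u_1'\}$: since $v_2 \in V_0$ has $d_U(v_2)$ large by Lemma~\ref{lem:lower bound of v in Y}, and since $v_2$'s partition of $U$ also closely matches the common one, the probability that there is an edge of $\mathcal{H}$ containing $v_2, u_1, u_1'$ (with the remaining $r-3$ vertices in $U$, chosen suitably) is $1-o(1)$ — this is exactly the analogue of event $C$ in Claim~\ref{Probability_Partitions_are_same_Claim}; fourth, combine the events, pick a good selection, and observe that the union of the $\binom{\ell}{2}$ link-edges through $v_1$ together with the one edge through $v_2$ forms a copy of a member of $\mathcal{K}_{\ell+1}^r$ on core $K$, contradicting that $\mathcal{H}$ is $\mathcal{K}_{\ell+1}^r$-free.

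One subtlety to handle carefully is that we must ensure the chosen vertices $u_1, u_1', u_2, \dots, u_{\ell-1}$ are genuinely distinct and avoid $V_0$ (both automatic since they come from $U$ and from distinct parts, with $u_1 \neq u_1'$ holding with probability $1-o(1)$), and that the total number of vertices used is at most $\binom{\ell+1}{2}(r)$ so that the resulting configuration is genuinely a subhypergraph on $n$ vertices — this is fine for $n$ large. A second subtlety: the edge through $v_2$ covering $\{u_1,u_1'\}$ uses $r-3$ further vertices; these may or may not coincide with the $u_k$'s, but that does not matter for membership in $\mathcal{K}_{\ell+1}^r$, since the definition only requires each pair of the core to be covered by \emph{some} edge of $F$, with no disjointness demanded.

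The main obstacle I expect is organizing the probabilistic argument so that all pairs through $v_1$ are simultaneously covered: unlike in Lemma~\ref{Partition are almost the same}, where only one copy of $T_{r-1}(\ell-1,\ell-1)$ plus one extra edge were needed, here we need the slightly larger configuration $T_{r-1}$ on the $\ell$-part set $\{u_1\} \cup \{u_1'\} \cup \{u_2,\dots,u_{\ell-1}\}$ realized in $L_U(v_1)$ minus the single pair $\{u_1,u_1'\}$ — but this is still only $O(1)$ many $(r-1)$-subsets to control, so the same union-bound estimate \eqref{Same Inequality to prove c} applies verbatim. Everything else is a direct adaptation of the machinery already developed for Lemma~\ref{Partition are almost the same}, so no new ideas beyond careful bookkeeping should be required.
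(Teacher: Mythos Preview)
Your third step --- covering the pair $\{u_1,u_1'\}$ by an edge through $v_2$ --- does not go through, and this is not a bookkeeping issue but a structural one. In Claim~\ref{Probability_Partitions_are_same_Claim}, event $C$ succeeded precisely because the hypothesis there was that the partitions of $v_i$ and $v_j$ \emph{disagree}: $u_{1,1}\in V_1^i\cap V_1^j$ while $u_{1,2}\in V_1^i\setminus V_1^j$, so from $v_j$'s point of view the two vertices sit in different parts and a crossing edge of $L_U(v_j)$ can contain both. At the present stage you have already proved Lemma~\ref{Partition are almost the same}, so the partitions essentially coincide; with probability $1-o(1)$ your random $u_1,u_1'\in V_1^1$ both lie in $V_1^2$ as well. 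Hence any $(r-1)$-edge of $L_U(v_2)$ containing both of them is a non-crossing edge, and all you know is that there are $o(n^{r-1})$ of those --- possibly zero. Since $U$ is weakly independent, any edge covering $\{u_1,u_1'\}$ must pass through some $v_k\in V_0$, and the same obstruction applies to every $v_k$. The fallback ``the edge $e$ itself handles that last pair'' would force both $u_1$ and $u_1'$ to be among the at most $r-2$ vertices of $e\cap U$, which fails for $r=3$ and in general is incompatible with choosing them randomly.

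The paper sidesteps this by a different choice of core: it takes $K=\{v_i,v_j,v_1',\dots,v_{\ell-1}'\}$ with \emph{both} offending vertices of $V_0$ in $K$ and only $\ell-1$ vertices from $U$, one per (common) part. The single ``hard'' pair is then $\{v_i,v_j\}$, which is covered for free by $e_0$ itself; every other pair is of crossing type and is covered with probability $1-o(1)$ by an edge of $L_U(v_i)$ or $L_U(v_j)$. If you want to salvage your route, the fix is exactly this: put $v_2$ into the core instead of a second vertex from $V_1^1$.
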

\begin{proof}
 Assume by contradiction that $v_i,v_j\in V_0$ and there is an edge $e_0\in \mathcal{H}$ such that $v_i,v_j\in e_0$. 
 For each $p \in [\ell-1]$, let $V'_p = \bigcap_{i=1}^{s} V_{p}^{i}\setminus e_0$ and by Lemma~\ref{Partition are almost the same}, we have that $|V'_{p}|=\frac{1+o(1)}{\ell-1}(n-s)$. The number of crossing hyperedges among $\{V'_p:p\in [\ell-1]\}$ is $$|T_{r-1}(V'_1,\cdots,V'_{\ell-1})|+o(n^{r-1})=\binom{\ell-1}{r-1}\left(\frac{n-s}{\ell-1}\right)^{r-1}+o(n^{r-1}).$$   
 We select $\ell-1$ vertices $v'_p\in V'_p$ for $p\in [\ell-1]$ uniformly and independently at random. With a similar probabilistic argument, for $k\in \{i,j\}$ we have  $$\mathbb{P}(\{v'_1,\cdots,v'_{\ell-1} \text{ form a }T_{r-1}(\ell-1,\ell-1) \text{ in }L_{\mathcal{H}}(v_k)\})\geq 1-o(1).$$
Consequently, we can select vertices $\{v'_1,\dots,v'_{\ell-1}\}$ where any $r-1$ of them form a hyperedge with either $x_i$ or $x_j$ in $\mathcal{H}$. Define $K = \{v_i, v_j, v'_1,\dots,v'_{\ell-1}\}$ and construct $S$ as the subhypergraph formed by $\mathcal{H}[K]$ plus the edge $e_0$, we obtain that $S \in \mathcal{K}_{\ell+1}^r$ with $K$ as its core, which contradicts to our assumption that $\mathcal{H}$ is $\mathcal{K}_{\ell+1}^{r}$-free.
\end{proof}
Now we are ready to prove Theorem~\ref{main theorem}.
\begin{proof}[Proof of Theorem~\ref{main theorem}]
Let $\mathcal{H}$ be a $\mathcal{K}_{\ell+1}^{r}$-free graph with $n$ vertices and with $\nu(\mathcal{H})\leq s$. Furthermore, we assume that $|\mathcal{H}|\geq|\mathcal{G}(n,\ell,s,r)|$. By Lemma~\ref{Number of Max Degree is Bounded} and Claim~\ref{U is Weakly independent set}, we can partition $V(\mathcal{H})$ into $V_0$ and $U$ such that $|V_0|=s$ and $U$ is a weakly independent set. Moreover, by Lemma~\ref{V0 is strongly independent set}, every edge in $\mathcal{H}$ has at most one vertex in $V_0$. Thus, for each vertex $x\in V_0$, the link graph $L_{\mathcal{H}}(x)$ contains no vertex in $V_0$ and $L_{\mathcal{H}}(x)$ is $\mathcal{K_{\ell}}^{r-1}$-free. Notice that $|V(L_{\mathcal{H}}(x))|\leq n-s$ and by Theorem~\ref{exact turan huaK}, we have 
$$d_{\mathcal{H}}(x)=|L_{\mathcal{H}}(x)|\leq t_{r-1}(n-s,\ell-1).$$
Therefore, $$|\mathcal{H}|=\sum_{x\in V_{0}}d_{\mathcal{H}}(x)\leq \sum_{x\in V_0}t_r(n-s,\ell-1)=s\cdot t_{r-1}(n-s,\ell-1),$$
with equality holds if for each $x\in V_0$, $L_{\mathcal{H}}(x)=T_{r-1}(n-s,\ell-1)$ and hence, the vertex $x$ admits a partition $U=V_1^{x}\cup\cdots\cup V_{\ell-1}^{x}$. If there exist two vertices $x,y \in V_0$ admitting different partitions of $U$, we may assume without loss of generality that $v_{1,1}, v_{1,2} \in V_1^x$ lie in different classes under $y$'s partition, implying the existence of a hyperedge $e \in E(\mathcal{H})$ containing $\{y, v_{1,1}, v_{1,2}\}$. Now we can select $v_i \in V_i^x$ for $2 \leq i \leq \ell-1$ (possibly in $e$) and define that $K = \{x, v_{1,1}, v_{1,2}, v_2, \dots, v_{\ell-1}\}$ with $S = \mathcal{H}[K] \cup \{e\}$. Then we obtain that $S \in \mathcal{K}_{\ell+1}^r$ where $K$ forms a core, yielding a contradiction. Thus all $v \in V_0$ induce identical partitions, proving that $\mathcal{H} = \mathcal{G}(n,\ell,s,r)$.
\end{proof}

\section{Further results and discussions}
We first provide a proof of Theorem~\ref{turan fano plane}. 

\begin{proof}[Proof of Theorem~\ref{turan fano plane}]
Let $\mathcal{G}$ be an edge maximum $\mathbb{F}$-free $3$-uniform graph with matching number at most $s$. Assume further that $|\mathcal{G}| \ge |\mathcal{F}(n,s)|$. Let $X \subseteq V(\mathcal{G})$ be the set of vertices in $\mathcal{G}$ whose degrees are at least $3(s+1)n + 1$, and let $Y = V(\mathcal{G}) - X$. Since $\Delta(\mathcal{G}[Y])\leq 3(s+1)n$, by Lemma~\ref{Hypergraph Vizing} we have 
\begin{align*}
  |\mathcal{G}[Y]|&\leq s \cdot \left(3 \cdot (\Delta(\mathcal{G}[Y]-1)+1) \right) \\
  &\leq 9s(s+1)n-2s.
\end{align*}
We first claim that $|X| = s$. By Lemma~\ref{Number of Max Degree is Bounded}, we have $|X| \leq s$. Furthermore, if $|X| \le s-1$, then since $n\geq 20s(s+1)$, we have $$\begin{aligned}|\mathcal{G}|&\leq  \sum_{x\in X}d_{\mathcal{G}}(x)+|\mathcal{G}[Y]|\\&\leq (s-1)\binom{n-1}{2}+9s(s+1)n-2s\\&<s\binom{n-s}{2}+\binom{s}{2}(n-s)=|\mathcal{F}(n,s)|,\end{aligned}$$ which contradicts our assumption. Thus, we may assume that $X = \{x_1, x_2, \ldots, x_s\}$. 

We now show that $Y$ is a weakly independent set in $\mathcal{G}$. Otherwise, suppose that there exists an edge $e_0$ in $\mathcal{G}[Y]$. Since $d_{\mathcal{G}}(x_1) \ge 3(s+1)n +1 \ge \binom{s+2}{2}$, there exist vertices $u_1, v_1 \in Y \setminus e_0$ such that $\{x_1, u_1, v_1\}$ is an edge in $\mathcal{G}$.  Furthermore, suppose we have already found $i+1$ pairwise disjoint edges $E_i = \{e_j = \{x_j, u_j, v_j\} : 1 \le j \le i\} \cup \{e_0\},$ where $i \le s-1$. Consider $x_{i+1} \in X$. Since $d_{\mathcal{G}}(x_{i+1}) \ge 3(s+1)n + 1 > \binom{3s}{2} \ge \binom{s + 2(i+1)}{2}$, there exist vertices $u_{i+1}, v_{i+1}$ such that $\{x_{i+1}, u_{i+1}, v_{i+1}\}$ is an edge disjoint from all edges in $E_i$. Consequently, we can find in $\mathcal{G}$ a matching of size $s+1$, contradicting the assumption that $\nu(\mathcal{G}) \le s$. When $s \le 2$, the proposition holds straightforwardly. We now consider the case $s \ge 3$. It suffices to show that $X$ is a weakly independent set in $\mathcal{G}$. Otherwise, suppose that $e = \{x_1, x_2, x_3\}$ is an edge in $\mathcal{G}$. We define
$$
E_1' = \left\{ \{y_1, y_2\} \subset Y : y_1, y_2, x_i \text{ forms a hyperedge for } 1 \leq i \leq 3 \right\},
$$
and let $G_1$ be the simple graph induced by $E_1'$. Let $Y_1 = V(G_1)$, and let $Y_2 = Y \setminus Y_1$. Since $\mathcal{G}$ is $\mathbb{F}$-free, it follows that $G_1$ is $K_4$-free. Let $|Y_1| = t$. Then $
e(G_1) \leq \frac{1}{3} t^2,$ and thus the number of missing crossing hyperedges is at least 
$$
f(t)=\binom{t}{2} - \frac{1}{3} t^2 + \binom{|Y_2|}{2} + |Y_1|\cdot |Y_2|=\binom{t}{2} - \frac{1}{3} t^2 + \binom{n-s-t}{2}+(n-s-t)t.
$$ Since $f'(t)=-\frac{2}{3}t\leq 0$ for $t\in [0,n-s]$, we have $$f(t)\geq f(n-s)=\frac{1}{6} (n-s)(n-s-3).$$
Therefore, since $n\geq 20s(s+1)$, $$ |\mathcal{G}|\leq \binom{s}{3}+s\binom{n-s}{2}+\binom{s}{2}(n-s)- \frac{1}{6} (n-s)(n-s-3)< |\mathcal{F}(n,s)|,$$ which contradicts our assumption for $\mathcal{G}$. Thus, $\mathcal{F}(n,s)$ is the unique extremal hypergraph.
\end{proof}

We note that both $\mathrm{ex}_r(n, \{\mathcal{K}_{\ell+1}^r, M_{s+1}^r\})$ and $\mathrm{ex}_3(n, \{\mathbb{F}, M_{s+1}^3\})$ can be viewed as extensions of the problems for $\mathcal{K}_{\ell+1}^r$ (see~\cite{M06}) and $\mathbb{F}$ (see~\cite{BR19}), respectively. In each case, one can regard the extremal construction as obtained by contracting one part (whose size depends on $n$) of $\mathcal{K}_{\ell+1}^r$ or $\mathbb{F}$ into $s$ vertices, so that every hyperedge in the extremal hypergraph intersects this part, thereby ensuring that the matching number does not exceed $s$. 

However, this is not always the case. For instance, consider the hypergraph $H_{\ell+1}^r$ with $\ell \ge r \ge 3$. By the work of Pikhurko~\cite{pikhurko2013Hl}, we know that for sufficiently large $n$, the extremal configuration of $\mathrm{ex}_r(n, H_{\ell+1}^r)$ coincides with that of $\mathrm{ex}_r(n, \mathcal{K}_{\ell+1}^r)$. Now we further consider the case where the matching number is at most $s$. When $\ell - 1 \le s \le \binom{\ell}{2}-1$, we construct a hypergraph $\mathcal{H}$ consisting of all edges defined in Definition~\ref{def of Gn,l,s,r} together with one additional $r$-edge that contains exactly $r-1$ vertices from $V_0$. It is easy to verify that this hypergraph is still $H_{\ell+1}^r$-free, and since every edge intersects $V_0$, its matching number remains $s$. However, the number of edges in this hypergraph equals $s \cdot t_{r-1}(n-s, \ell-1) + 1$, which shows that the extremal hypergraph is not $G(n,\ell,s,r)$.

Therefore, we propose the following conjecture.
\begin{conj}\label{conj for H}
Let $\ell \ge r \ge 3$ and $s \ge \binom{\ell}{2}$ be integers. 
For sufficiently large $n$, we have
$$
\mathrm{ex}_r(n, \{H_{\ell+1}^r, M_{s+1}^r\}) = s \cdot t_{r-1}(n-s, \ell-1).
$$
In particular, $\mathcal{G}(n,\ell,s,r)$ is the unique extremal hypergraph.
\end{conj}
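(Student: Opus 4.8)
The plan is to mimic the proof of Theorem~\ref{main theorem} as closely as possible, replacing $\mathcal{K}_{\ell+1}^r$ by $H_{\ell+1}^r$ and exploiting the hypothesis $s \ge \binom{\ell}{2}$ to kill the pathological configuration exhibited in the discussion above. Let $\mathcal{H}$ be an edge-maximum $H_{\ell+1}^r$-free $n$-vertex $r$-graph with $\nu(\mathcal{H}) \le s$, so $|\mathcal{H}| \ge |\mathcal{G}(n,\ell,s,r)| = s\cdot t_{r-1}(n-s,\ell-1)$. As before, set $V_0 := \{v : d_{\mathcal{H}}(v) \ge r(s+1)n^{r-2}+1\}$ and $U := V(\mathcal{H})\setminus V_0$. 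Lemma~\ref{Number of Max Degree is Bounded} still gives $|V_0|\le s$. Since every $H_{\ell+1}^r \in \mathcal{K}_{\ell+1}^r$, the link $L_{\mathcal{H}}(v)$ of any vertex is again $\mathcal{K}_{\ell}^{r-1}$-free (the proof of Lemma~\ref{Link graph is Klr-free} only enlarges edges, so it applies verbatim), and Theorem~\ref{exact turan huaK} bounds $d_{\mathcal{H}}(v)\le t_{r-1}(n-1,\ell-1)$. Hence the counting argument of Lemma~\ref{lem:upper bound V0} carries over unchanged to give $|V_0|=s$, and the matching-extension argument of Lemma~\ref{U is Weakly independent set} shows $U$ is weakly independent. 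Lemma~\ref{lem:lower bound of v in Y} (a pure counting lemma using only the lower bound on $|\mathcal{H}|$ and the trivial bounds $d_U(v_{i_1},\dots,v_{i_k})\le n^{r-k}$) also transfers verbatim, so each $L_U(v_i)$ has at least $t_{r-1}(n-s,\ell-1)-cn^{r-2}$ edges, and Theorem~\ref{stability} applies to give each $v_i$ a partition $U = V_1^i\cup\cdots\cup V_{\ell-1}^i$ into near-equal parts with all but $o(n^{r-1})$ edges of $L_U(v_i)$ crossing.

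The only place where $\mathcal{K}_{\ell+1}^r$-freeness was genuinely used (rather than just the weaker $\mathcal{K}_\ell^{r-1}$-freeness of links) is in the three structural lemmas that pin down $V_0$: Lemma~\ref{Partition are almost the same} (the partitions induced by distinct $v_i$ coincide), Lemma~\ref{V0 is strongly independent set} ($V_0$ is strongly independent), and the final identical-partition argument in the proof of Theorem~\ref{main theorem}. Each of these derived a contradiction by building, from one ``bad'' edge $e_0$ together with carefully chosen crossing edges of the links, a copy of some member $S\in\mathcal{K}_{\ell+1}^r$ with a prescribed $(\ell+1)$-set $K$ as core. For the $H_{\ell+1}^r$ version we instead need $S$ to be exactly (a Berge copy of) $H_{\ell+1}^r$: that is, a system of $\binom{\ell+1}{2}$ edges, one for each pair of $K=\{w_0,w_1,\dots,w_\ell\}$, where each edge contains its pair and the extra $r-2$ vertices outside $K$ are all distinct. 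This is where the condition $s\ge\binom{\ell}{2}$ enters. In the original arguments $K$ had the shape $\{v_i,\,v_j\text{ or }u_{1,1}u_{1,2},\,v_1',\dots,v_{\ell-1}'\}$ with at most one or two vertices in $V_0$; the $\binom{\ell}{2}$ pairs not involving the special vertex $w_0$ must be covered by edges each meeting $V_0$ (since $U$ is weakly independent we cannot use edges inside $U$), so we need $\binom{\ell}{2}$ vertices of $V_0$ available — exactly what $s\ge\binom{\ell}{2}$ provides. Concretely, I would redo Lemma~\ref{V0 is strongly independent set} by choosing, in addition to $v_i,v_j$, a further $\binom{\ell}{2}-2$ vertices of $V_0$ (using $|V_0|=s\ge\binom{\ell}{2}$), selecting for each remaining pair of $K$ a suitable crossing $(r-1)$-edge in the link of a distinct $V_0$-vertex — a union-bound over the $O(1)$ pairs, using the near-$T_{r-1}$ structure of every link, shows the required edges can be found simultaneously with all the $r-2$ auxiliary vertices pairwise distinct and outside $K$ — and thereby embed a genuine Berge-$H_{\ell+1}^r$. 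The analogous modification handles Lemma~\ref{Partition are almost the same} and the closing argument.

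The main obstacle is precisely this last point: carrying out the simultaneous selection of $\binom{\ell+1}{2}$ edges so that the Berge-disjointness condition (distinct extra vertices across all chosen edges, none landing in the fixed $(\ell+1)$-core $K$ or colliding with another chosen edge) holds, while each chosen edge also lies in the link of the correct $V_0$-vertex. Since $\ell$ is a constant this is only a bounded number of constraints, each ruling out an $o(n)$ fraction of choices, so a greedy / union-bound argument succeeds for $n$ large; but it must be set up carefully, because unlike in the $\mathcal{K}_{\ell+1}^r$ case we cannot let edges share vertices freely. A secondary subtlety is the extremal characterisation: after establishing that $V_0$ is strongly independent, all links agree, and $U$ is weakly independent, the same final computation $|\mathcal{H}| = \sum_{x\in V_0} d_{\mathcal{H}}(x) \le s\cdot t_{r-1}(n-s,\ell-1)$ applies, and the identical-partitions argument (again upgraded to produce a Berge-$H_{\ell+1}^r$) forces $\mathcal{H}=\mathcal{G}(n,\ell,s,r)$; one should double-check that adding the single extra edge with $r-1$ vertices in $V_0$, which was the obstruction for small $s$, is now genuinely impossible because such an edge together with the complete link structure on the remaining vertices of $V_0$ would already contain a Berge-$H_{\ell+1}^r$ once $s\ge\binom{\ell}{2}$.
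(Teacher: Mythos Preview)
The paper does not prove this statement; it is posed as an open conjecture, so there is no ``paper's proof'' to compare against. Your proposal must therefore be judged on its own merits, and it contains a genuine gap at the very first step.

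You assert that ``the link $L_{\mathcal{H}}(v)$ of any vertex is again $\mathcal{K}_{\ell}^{r-1}$-free (the proof of Lemma~\ref{Link graph is Klr-free} only enlarges edges, so it applies verbatim).'' This is false. The proof of Lemma~\ref{Link graph is Klr-free} takes a member $S\in\mathcal{K}_{\ell}^{r-1}$ in the link and enlarges each edge by $v$, producing a member of $\mathcal{K}_{\ell+1}^{r}$; but this member is \emph{never} $H_{\ell+1}^r$, since every resulting edge contains $v$, whereas in $H_{\ell+1}^r$ the $r-2$ extension vertices of distinct core pairs must be pairwise disjoint. Concretely, for $r=\ell=3$ the $3$-graph on $\{v,a,b,c\}$ with edges $vab,\,vbc,\,vac$ is $H_4^3$-free (it has only three edges on four vertices), yet $L_{\mathcal{H}}(v)$ is the triangle $K_3\in\mathcal{K}_3^2$. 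So $H_{\ell+1}^r$-freeness of $\mathcal{H}$ does \emph{not} imply $\mathcal{K}_{\ell}^{r-1}$-freeness (nor even $H_{\ell}^{r-1}$-freeness) of links.

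This missing link bound is load-bearing: without $d_{\mathcal{H}}(v)\le t_{r-1}(n-1,\ell-1)$ you cannot run the counting in Lemma~\ref{lem:upper bound V0} to force $|V_0|\ge s$, and without $\mathcal{K}_{\ell}^{r-1}$-freeness of $L_U(v_i)$ you cannot invoke Theorem~\ref{stability} to obtain the near-$T_{r-1}$ partition structure that the rest of your argument relies on. Repairing this would require a genuinely new ingredient --- for instance, an asymptotic degree bound for vertices in $H_{\ell+1}^r$-free $r$-graphs, or a direct application of Pikhurko's stability theorem for $H_{\ell+1}^r$ to the whole hypergraph $\mathcal{H}$ rather than to individual links --- and it is not clear that the condition $s\ge\binom{\ell}{2}$ is of any help at this early stage. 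Your later observation about using the $s\ge\binom{\ell}{2}$ vertices of $V_0$ to supply disjoint extension vertices for a Berge-$H_{\ell+1}^r$ is a plausible idea for the endgame, but the argument cannot get off the ground without first controlling the links.
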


 
\section*{Acknowledgements}
This project was initiated during the visit of the second author to the Institute for Basic Science (IBS). We gratefully acknowledge the Extremal Combinatorics and Probability Group at IBS for organizing the 3rd ECOPRO Student Research Program. In particular, Caihong Yang and Jiasheng Zeng would like to express their sincere appreciation to Professor Hong Liu for providing this exceptional research opportunity and for many fruitful discussions that greatly inspired and shaped our work. Caihong Yang would like to thanks Professor Jianfeng Hou for his helpful
comments and interesting directions of this problem.  Caihong Yang is supported by the Institute for Basic Science (IBS-R029-C4), the National Key R\&D Program of China (Grant No.~2023YFA1010202), and the Central Guidance on Local Science and Technology Development Fund of Fujian Province (Grant No.~2023L3003), Xiao-Dong Zhang is supported by  the National Natural Science Foundation of China (No.12371354) and the Montenegrin-Chinese Science and Technology (No.4-3).

\bibliographystyle{abbrv}
\bibliography{matching}

@article {FWY24,
    AUTHOR = {Fu, Lingting and Wang, Jian and Yang, Weihua},
     TITLE = {The maximum number of edges in a {$\{K_{r+1}, M_{k+1}\}$}-free
              graph},
   JOURNAL = {Discuss. Math. Graph Theory},
  FJOURNAL = {Discussiones Mathematicae. Graph Theory},
    VOLUME = {44},
      YEAR = {2024},
    NUMBER = {4},
     PAGES = {1617--1629},
      ISSN = {1234-3099,2083-5892},
   MRCLASS = {05C35},
  MRNUMBER = {4788118},
       DOI = {10.7151/dmgt.2515},
       URL = {https://doi.org/10.7151/dmgt.2515},
}

@article {HLYZZ25,
    AUTHOR = {Hou, Jianfeng and Li, Heng and Yang, Caihong and Zeng, Qinghou
              and Zhang, Yixiao},
     TITLE = {Two stability theorems for {${\cal K}^r _{\ell +1}$}-saturated
              hypergraphs},
   JOURNAL = {J. Graph Theory},
  FJOURNAL = {Journal of Graph Theory},
    VOLUME = {109},
      YEAR = {2025},
    NUMBER = {4},
     PAGES = {492--504},
      ISSN = {0364-9024,1097-0118},
   MRCLASS = {05C65 (05C35)},
  MRNUMBER = {4919337},
       DOI = {10.1002/jgt.23241},
       URL = {https://doi.org/10.1002/jgt.23241},
}

@article {M06,
    AUTHOR = {Mubayi, Dhruv},
     TITLE = {A hypergraph extension of {T}ur\'an's theorem},
   JOURNAL = {J. Combin. Theory Ser. B},
  FJOURNAL = {Journal of Combinatorial Theory. Series B},
    VOLUME = {96},
      YEAR = {2006},
    NUMBER = {1},
     PAGES = {122--134},
      ISSN = {0095-8956,1096-0902},
   MRCLASS = {05C65 (05C35)},
  MRNUMBER = {2185983},
MRREVIEWER = {Peter\ Keevash},
       DOI = {10.1016/j.jctb.2005.06.013},
       URL = {https://doi.org/10.1016/j.jctb.2005.06.013},
}

@article{T41,
  title={On an extremal problem in graph theory},
  author={Tur{\'a}n, P},
  journal={Mat. Fiz. Lapok},
  volume={48},
  pages={436--452},
  year={1941}
}

@incollection {KE11,
    AUTHOR = {Keevash, Peter},
     TITLE = {Hypergraph {T}ur\'{a}n problems},
 BOOKTITLE = {Surveys in combinatorics 2011},
    SERIES = {London Math. Soc. Lecture Note Ser.},
    VOLUME = {392},
     PAGES = {83--139},
 PUBLISHER = {Cambridge Univ. Press, Cambridge},
      YEAR = {2011},
   MRCLASS = {05-02 (05C65)},
  MRNUMBER = {2866732},
}

@article {Erdos65,
    AUTHOR = {Erd\H{o}s, P.},
     TITLE = {A problem on independent {$r$}-tuples},
   JOURNAL = {Ann. Univ. Sci. Budapest. E\"{o}tv\"{o}s Sect. Math.},
  FJOURNAL = {Annales Universitatis Scientiarum Budapestinensis de Rolando
              E\"{o}tv\"{o}s Nominatae. Sectio Mathematica},
    VOLUME = {8},
      YEAR = {1965},
     PAGES = {93--95},
      ISSN = {0524-9007},
   MRCLASS = {05.04},
  MRNUMBER = {260599},
MRREVIEWER = {W. Wessel},
}

@article {Frankl13,
    AUTHOR = {Frankl, Peter},
     TITLE = {Improved bounds for  {E}rd{\H o}s' matching conjecture},
   JOURNAL = {J. Combin. Theory Ser. A},
  FJOURNAL = {Journal of Combinatorial Theory. Series A},
    VOLUME = {120},
      YEAR = {2013},
    NUMBER = {5},
     PAGES = {1068--1072},
      ISSN = {0097-3165,1096-0899},
   MRCLASS = {05D05 (05C65 05C70)},
  MRNUMBER = {3033661},
MRREVIEWER = {Ahmad\ Mahmoody},
       DOI = {10.1016/j.jcta.2013.01.008},
       URL = {https://doi.org/10.1016/j.jcta.2013.01.008},
}

@article {AF24,
    AUTHOR = {Alon, Noga and Frankl, P\'eter},
     TITLE = {Tur\'an graphs with bounded matching number},
   JOURNAL = {J. Combin. Theory Ser. B},
  FJOURNAL = {Journal of Combinatorial Theory. Series B},
    VOLUME = {165},
      YEAR = {2024},
     PAGES = {223--229},
      ISSN = {0095-8956,1096-0902},
   MRCLASS = {05C35 (05C70)},
  MRNUMBER = {4678741},
MRREVIEWER = {Narayanan\ Narayanan},
       DOI = {10.1016/j.jctb.2023.12.002},
       URL = {https://doi.org/10.1016/j.jctb.2023.12.002},
}

@article {EG59,
    AUTHOR = {Erd{\H o}s, P. and Gallai, T.},
     TITLE = {On maximal paths and circuits of graphs},
   JOURNAL = {Acta Math. Acad. Sci. Hungar.},
  FJOURNAL = {Acta Mathematica. Academiae Scientiarum Hungaricae},
    VOLUME = {10},
      YEAR = {1959},
     PAGES = {337--356},
      ISSN = {0001-5954},
   MRCLASS = {05.00},
  MRNUMBER = {114772},
MRREVIEWER = {W. T. Tutte},
       DOI = {10.1007/BF02024498},
       URL = {https://doi.org/10.1007/BF02024498},
}

@article {FS05,
    AUTHOR = {F\"{u}redi, Zolt\'{a}n and Simonovits, Mikl\'{o}s},
     TITLE = {Triple systems not containing a {F}ano configuration},
   JOURNAL = {Combin. Probab. Comput.},
  FJOURNAL = {Combinatorics, Probability and Computing},
    VOLUME = {14},
      YEAR = {2005},
    NUMBER = {4},
     PAGES = {467--484},
      ISSN = {0963-5483},
   MRCLASS = {05B07 (05C65)},
  MRNUMBER = {2160414},
MRREVIEWER = {Yi Zhao},
       DOI = {10.1017/S0963548305006784},
       URL = {https://doi.org/10.1017/S0963548305006784},
}

@Article{KS05,
 Author = {Peter {Keevash} and Benny {Sudakov}},
 Title = {{The {T}ur\'an number of the Fano plane}},
 FJournal = {{Combinatorica}},
 Journal = {{Combinatorica}},
 ISSN = {0209-9683; 1439-6912/e},
 Volume = {25},
 Number = {5},
 Pages = {561--574},
 Year = {2005},
 Publisher = {Springer, Berlin/Heidelberg; J\'anos Bolyai Mathematical Society, Budapest},
 Language = {English},
 MSC2010 = {05D05 05C65},
 Zbl = {1089.05074}
}

@article {BR19,
    AUTHOR = {Bellmann, Louis and Reiher, Christian},
     TITLE = {Tur\'{a}n's theorem for the {F}ano plane},
   JOURNAL = {Combinatorica},
  FJOURNAL = {Combinatorica. An International Journal on Combinatorics and
              the Theory of Computing},
    VOLUME = {39},
      YEAR = {2019},
    NUMBER = {5},
     PAGES = {961--982},
      ISSN = {0209-9683},
   MRCLASS = {05C65 (05C35 05D05)},
  MRNUMBER = {4039597},
MRREVIEWER = {J\'{o}zsef Balogh},
       DOI = {10.1007/s00493-019-3981-8},
       URL = {https://doi-org.proxy.cc.uic.edu/10.1007/s00493-019-3981-8},
}

@article{G2024,
  title={On {T}ur{\'a}n problems with bounded matching number},
  author={Gerbner, D{\'a}niel},
  journal={J. Graph Theory},
  volume={106},
  number={1},
  pages={23--29},
  year={2024},
  publisher={Wiley Online Library}
}

@article{LZL2025,
  title={Tur{\'a}n number of complete bipartite graphs with bounded matching number},
  author={Luo, Huan and Zhao, Xiamiao and Lu, Mei},
  journal={Discrete Math.},
  volume={348},
  number={9},
  pages={114552},
  year={2025},
  publisher={Elsevier}
}

@article{GTZ2025,
  title={On hypergraph {T}ur{\'a}n problems with bounded matching number},
  author={Gerbner, D{\'a}niel and Tompkins, Casey and Zhou, Junpeng},
  journal={European J. Combin.},
  volume={127},
  pages={104155},
  year={2025},
  publisher={Elsevier}
}

@article{WWY2025,
  title={Hypergraph {T}ur{\'a}n problem of the generalized triangle with bounded matching number},
  author={Wang, Jian and Wang, Wenbin and Yang, Weihua},
  journal={arXiv preprint arXiv:2507.04579},
  year={2025}
}

@article{CLQY25,
  title={Triple systems with bounded matching number: some constructions and exact {T}ur\'{a}n number},
  author={Nannan Chen and Miao Liu and Yuzhen Qi and Caihong Yang},
  journal={arXiv preprint arXiv:2511.17000},
  year={2025}
}

@article{xue2024generalized,
  title={On generalized {T}ur{\'a}n problems with bounded matching number},
  author={Xue, Yisai and Kang, Liying},
  journal={arXiv preprint arXiv:2410.12338},
  year={2024}
}

@article{zhu2025extremal,
  title={Extremal problems for a matching and any other graph},
  author={Zhu, Xiutao and Chen, Yaojun},
  journal={J. Graph Theory},
  volume={109},
  number={1},
  pages={19--24},
  year={2025},
  publisher={Wiley Online Library}
}

@article{zhao_and_Lu2024generalized,
  title={Generalized {T}ur{\'a}n problems for a matching and long cycles},
  author={Zhao, Xiamiao and Lu, Mei},
  journal={arXiv preprint arXiv:2412.18853},
  year={2024}
}

@article{pikhurko2013Hl,
  title={Exact computation of the hypergraph {T}ur{\'a}n function for expanded complete 2-graphs},
  author={Pikhurko, Oleg},
  journal={J. Combin. Theory Ser. B},
  volume={103},
  number={2},
  pages={220--225},
  year={2013},
  publisher={Elsevier}
}

@article{frankl2026matchingdm,
  title={On the matching problem in random hypergraphs},
  author={Frankl, Peter and Nie, Jiaxi and Wang, Jian},
  journal={Discrete Math.},
  volume={349},
  number={3},
  pages={114839},
  year={2026},
  publisher={Elsevier}
}

@article{gerbner2017extremalSIADM,
  title={Extremal results for Berge hypergraphs},
  author={Gerbner, D{\'a}niel and Palmer, Cory},
  journal={SIAM J. Discrete Math.},
  volume={31},
  number={4},
  pages={2314--2327},
  year={2017},
  publisher={SIAM}
}
 
\end{document}